\documentclass[10pt, english]{article}

\usepackage{latexsym, graphicx}
\usepackage{amsmath}
\usepackage{amsfonts}
\usepackage{amssymb}
\usepackage{hyperref}
\usepackage{babel}
\usepackage[babel=true]{csquotes}
\usepackage{geometry}
\usepackage{color}

\usepackage{amssymb,amsmath,amsfonts,mathrsfs,amsxtra,amsthm}
\usepackage{graphicx,color,subfig}
\usepackage{accents,xspace}
\usepackage{dsfont}
\usepackage{amscd}
\newcommand{\mypapersize}{
\setlength{\textwidth }{16cm}
\setlength{\textheight}{23.5cm}
\setlength{\oddsidemargin}{-0.14cm}
\setlength{\topmargin}{-1.6cm}
}

\mypapersize

\newtheorem{thm}{Theorem}[section]
\newtheorem*{thm*}{Theorem}
\newtheorem{prop}[thm]{Proposition}
\newtheorem{dfn}[thm]{Definition}
\newtheorem{lem}[thm]{Lemma}
\newtheorem{oss}[thm]{Remark}

\newtheorem{cor}[thm]{Corollary}

\usepackage{fancyhdr}
\pagestyle{fancy}
\fancyhead[L]{\textbf{ \thepage}}

\begin{document}
\title{Part II On strong and non uniform stability of locally damped Timoshenko beam:
Mathematical corrections and gap filling in the proof of Theorem 2.2 in the publication:
\\
\enquote{Uniform stabilization for the Timoshenko beam by a locally distributed
damping} \vspace{0.2cm}
}
\author{Fatiha Alabau-Boussouira\footnote{Laboratoire Jacques-Louis Lions Sorbonne Universit\'{e}, Universit\'{e} de Lorraine} 
}
\bibliographystyle{plain}

\date{July 23, 2023}
\maketitle {}
\tableofcontents
\begin{abstract}
In part I of the rebuttal (see \cite{AB2023I} to the article \cite{SW2003} entitled "Uniform stabilization for the Timoshenko beam by a locally distributed damping" published in 2003, in the journal
{\it Electronic Journal of Differential Equations}, we prove that Lemma 3.6 and Theorem 3.1 are unproved due to major flaws (contradictory assumptions). We also show that Theorem 2.2 and its proofs of strong stability, and non uniform stability in the case of different speeds of propagation, contain several incorrect arguments and several gaps (including missing functional frames). In this part II, we give the precise missing functional frames, fill the gaps and correct several parts contained in the proof of Theorem 2.2 in \cite{SW2003}. We also complete a missing argument (see Remark \ref {ThmA} and Remark \ref{Gaps}) in the proof of Theorem A in \cite{NRL1986} used by \cite{SW2003}. For this we state and prove Proposition \ref{UptoPi} (see also Proposition \ref{UptoPiB} for a general formulation in Banach spaces). We also give the correct formulations, and proofs of strong stability and non uniform stability (in case of different speeds of propagation) for Timoshenko beams.
\end{abstract}
\section{Introduction}
In this part II (see \cite{AB2023I} for part I), we correct the strong stability and non uniform stability results for non equal speeds of propagation presented in the paper ”Uniform stabilization for the Timoshenko beam by a locally distributed damping” published in 2003 in the journal Electronic Journal of Differential Equations written by Abdelaziz Soufyane and Ali Wehbe.

Let us recall that \cite{SW2003} deals with the Timoshenko system introduced by Timoshenko. in the reference [19] of \cite{SW2003} (and as \cite{T1921} in the present paper). This system is denoted by (1.3) in \cite{SW2003}, which is given by:

\enquote{
\begin{equation*}\label{C4.55}%\tag{$C3.1$}
\begin{cases}
\rho u_{tt}=K(u_{x}-v)_{x},  \quad t\geqslant 0, x \in (0,l), \\[1em]
I_{\rho }v_{tt}=EI v_{xx}+K(u_{x}-v) - bv_{t}, \quad t\geqslant 0, x \in (0,l), \hspace{7.5cm}   (1.3)\\
u(t,0)=u(t,l)=0,\quad v(t,0)=v(t,l)=0, \quad t \geqslant 0,\\
u(0,\cdot)=u_0(\cdot),u_t(0,\cdot)=u_1(\cdot), v(0,\cdot)=v_0(\cdot),v_t(0,\cdot)=v_1(\cdot), \quad x \in (0,l).
\end{cases} 
\end{equation*}
}

We recall the setting in which this system is studied in \cite{SW2003}: The coefficients $ \rho, K, I_{\rho}, E, I$ in (1.3) are assumed to be constant and strictly positive, whereas the damping coefficient $b$ in the second wave equation is assumed to be nonnegative and strictly positive  and bounded away from $0$ on a subset $[b_0,b_1]$  of the spatial domain $[0,l]$ in Theorem 2.2 and Theorem 3.1 of \cite{SW2003} (see the assumption (2.4) in \cite{SW2003}).

 In summary,  the Timoshenko system couples two wave equations, through first order and zero order (in space) coupling terms. The second equation in (1.3) is subjected to a linear localized damping term, whereas the first  wave equation is not directly damped. This situation is described as indirect damping of coupled systems.
  
This system is classically reformulated as a first order abstract equation $U'=LU$ associated to the corresponding initial conditions $U(0)=U_0$, where $U=(u,u_2,v,v_2)^T$. The pivot space to be considered is $(L^2(0,l))^4$.
The corresponding unbounded operator $L: D(L) \subset (L^2(0,l))^4 \mapsto (L^2(0,l))^4$ is defined by
\begin{equation}\label{C4.56}%\tag{$C4.33$}
L=\begin{pmatrix}
0 & Id_{H^1_0(0,l)} & 0 & 0 \\
\dfrac{K}{\rho} \partial_{xx} & 0 & - \dfrac{K}{\rho} \partial_{x} & 0 \\
0 & 0 & 0 & Id_{H^1_0(0,l)} \\
\dfrac{K}{I_{\rho} }\partial_{x} & 0 & \dfrac{E\,I}{I_{\rho} }\partial_{xx} - \dfrac{K}{I_{\rho}} & - \dfrac{b}{I_{\rho}}
\end{pmatrix}
\end{equation}
 where the domain of $L$ is defined by:
\begin{equation}\label{C4.57}%\tag{$C4.34$}
D(L)=\Big(\big(H^2(0,l)\cap H^1_0(0,l))\times \big(H^1_0(0,l)\big)\Big)^2.
\end{equation}
The associated energy space is defined as $(H^1_0(0,l)\times L^2(0,l))^2$.

\vskip 2mm

As mentioned in the abstract and in \cite{AB2023I} for part I, \cite{SW2003} contains important flaws and gaps. One theorem, namely the main theorem, cannot be corrected as written, this is Theorem 3.1 and Lemma 3.6 on which Theorem 3.1 is relying.
The purpose of this part II, is to present a correction to the two claims in Theorem 2.2.

We choose to present this corrective part II in a pedagogic way for an unaware public of readers, so that readers can understand on a first step why \cite{SW2003} contains some incorrect mathematical arguments and also mathematical gaps in the proofs of Theorem 2.2 in Section 2, and on a second step how to correct them, and how to fill the gaps in the proofs. An important part is to build the missing functional frames. This has also to be performed it in a precise order, to deal between second order evolution equations and associated first order evolution system through Riemann invariants. This has also to be guaranteed by preserving the necessary stability properties (see for instance Remark \ref{S1C_0notX_0stable} to see that it is not possible for every involved semigroups) and the necessary invariance properties (for the essential spectral radius). These arguments are probably harder to follow for unaware public of readers, even if they are essential for the settings and the proofs.

\begin{oss}\label{Poly}
We deeply correct here the parts of \cite{SW2003} that can be corrected. There exists an alternative, anterior and more general mathematical approach to consider indirect stabilization issues (as well as indirect observability or indirect controllability issues).
We refer the reader to \cite{AB2007} for stronger stability results for indirect damping of Timoshenko systems, namely exponential stability for equal speeds, polynomial stability and strong stability in the case of non equal speed, and optimal general decay rates for arbitrary nonlinear damping in the case of equal speeds, together with some extension in the case of non equal speed. 

These results and the method to prove polynomial stability for smoother initial data in the case of unequal speeds of propagation, is based on the general methodology introduced in 1999-2002 in \cite{A1999, A2000, A2002}. This general methodology is fundamentally relying on the underlying semigroup property and on the proof that for indirectly damped systems or for any system including any hidden indirect damping phenomenon, the energy of the semigroup trajectories satisfies linear non-differential integral inequalities, which involve the degree of smoothness of the initial data, through the use of higher order energies. Timoshenko systems enter in the general frame early developed in \cite{A1999, A2000, A2002}, as shown in \cite{AB2007}. Note that non exponential stability was proved in \cite{ACK2002} for general abstract linearly globally and indirectly damped systems of weakly coupled second order hyperbolic equations, using the invariance of the essential spectral radius by compact operators.

For a general and detailed presentation of the higher order energy and non-differential integral inequality method for general indirect stabilization, indirect controllability, and indirect observability for partial differential equations, we refer the reader to the survey \cite{AB2012}. This allows to understand that a common and coherent mathematical frame can be given for indirect control of coupled systems. 

For lower energy estimates, and some finer properties, we refer the reader to \cite{AB2010$_1$, AB2010$_2$, AB2011}. 
\end{oss}
 
\section{Corrections to flaws and gaps in the proof of Theorem 2.2 in Section 2. of the article \cite{SW2003}}

\subsection{Correction of a first flaw in the proof of the strong stability result in Theorem 2.2  of \cite{SW2003}}
As recalled, system (1.3) can be classically reformulated as a first order system with the underlying semigroup generator $L$ as defined in \eqref{C4.56}-{\eqref{C4.57}. For their proof of the strong stability in Theorem 2.2 at page 4, Soufyane and Wehbe \cite{SW2003} use Benchimol's results (stated in Theorem 2.3 at page 3), showing by contradiction that $L$ has no pure imaginary eigenvalues.

The authors assume that the underlying semigroup has a pure imaginary eigenvalue $i\omega$, where $\omega \in \mathbb{R}^{\ast}$, with a corresponding non vanishing eigenvector $U_1$ and claim that it leads to a contradiction, namely that $U_1 \equiv 0$. The property that $U_1$ is an eigenvector of $L$ associated to the eigenvalue $i\omega$ with $U_1=(u, u_2,v,v_2)^T \in D(L)$ leads to the system

\begin{equation}\label{R16}%\tag{$C2.1$}
\begin{cases}
Ku_{xx} -Kv_x=-\rho\, \omega^2 u, \quad \mbox{in } (0,l),\\
EIv_{xx} +Ku_x -Kv-ib\omega v=-I_{\rho} \omega^2 v, \quad \mbox{in } (0,l),\\
u_2=i\omega u, \quad \mbox{in } (0,l),\\
v_2=i \omega v , \quad \mbox{in } (0,l),\\
u(0)=u(l)=v(0)=v(l)=0.
\end{cases}
\end{equation}
Hence, this implies that $(u,v)$ is a non vanishing solution of the coupled second order system in space denoted by "(2.5)" in \cite{SW2003}:

\enquote{
\begin{equation*}\label{R16b}%\tag{$R3.1$}
\begin{cases}
Ku_{xx} -Kv_x=-\rho\, \omega^2 u, \quad \mbox{in } (0,l),\\
EIv_{xx} +Ku_x -Kv-ib\omega v=-I_{\rho} \omega^2 v, \quad \mbox{in } (0,l), \hspace{7.5cm}   (2.5)\\
u(0)=u(l)=v(0)=v(l)=0.
\end{cases}
\end{equation*}
}

We refer to \cite{AB2023I} for the first flaw in this proof, coming from the fact that the unknown functions $u$, and $v$ are complex-valued and not real-valued as assumed in \cite{SW2003}, so that it is not mathematically correct as in \cite{SW2003}, to multiply the second equation by $v$, perform some integration by parts and take the real and imaginary part of the resulting equation to deduce that the two identities at page 4 of \cite{SW2003} just after "(2.5)" are satisfied.

The proof in \cite{AB2023I} can easily be corrected as follows.  The solutions $(u,v)$ of \eqref{R16b} are in $(H^2(0,l) \cap H^1_0(0,l))^2$. We multiply the first equation of \eqref{R16} (or equivalently of (2.5)) by $\overline{u}$. After elementary integration by parts and the use of the boundary conditions on $u$, this leads to:
\begin{equation}\label{R18}%\tag{$C2.2$}
K\int_0^l |u_x|^2 dx +K \int_0^l \overline{u}v_xdx  -\rho\omega^2 \int_0^l |u|^2dx=0.
\end{equation}
We now multiply the second equation of \eqref{R16} by $\overline{v}$, integrate by part over the interval $[0,l]$ and perform an integration by parts on the first term involving $\int_0^l EI\partial_{xx}v\overline{v}dx=-\int_0^l EI \Big|v_x\Big|^2dx$, due to the homogeneous Dirichlet boundary condition on $v$ at $x=0$ and $x=l$. This leads to the equality

\begin{equation}\label{R17}%\tag{$C2.3$}
EI\int_0^l \Big|\partial_xv\Big|^2 dx + \int_0^l \Big(K-I_{\rho}\omega^2\Big)|v|^2dx +i\omega \int_0^l b|v|^2dx -K \int_0^l \partial_xu \overline{v}dx=0.
\end{equation}

Integrating by parts the last term in \eqref{R17} and using the boundary conditions on $v$, we get:
\begin{equation}\label{R19}%\tag{$C2.4$}
EI\int_0^l |v_x|^2 dx + K \int_0^l \overline{\overline{u}v_x}dx +
\int_0^l \Big(K-I_{\rho}\omega^2\Big)|v|^2dx +i\omega \int_0^l b|v|^2dx =0.
\end{equation}
Adding \eqref{R18} and \eqref{R19}, we obtain
\begin{equation}\label{R20}%\tag{$C2.5$}
K\int_0^l |u_x|^2 dx + EI\int_0^l |v_x|^2 dx + 2K \int_0^l \Re(\overline{u}v_x)dx - \rho\omega^2 \int_0^l |u|^2dx +
\int_0^l \Big(K-I_{\rho}\omega^2\Big)|v|^2dx +i\omega \int_0^l b|v|^2dx =0
\end{equation}
Taking respectively the imaginary part and the real part in \eqref{R20}, we get
\begin{equation}\label{R21}%\tag{$C2.6$}
\begin{cases}
\int_0^l b(x)|v|^2dx =0\,\\
 K\int_0^l |u_x|^2 dx + EI\int_0^l |v_x|^2 dx + 2K \int_0^l \Re(\overline{u}v_x)dx - \rho\omega^2 \int_0^l |u|^2dx +
\int_0^l \Big(K-I_{\rho}\omega^2\Big)|v|^2dx =0.
\end{cases}
\end{equation}

The following assumption in \cite{SW2003}:
\enquote{
\begin{equation}\label{R22}%\tag{$2.4$}
b \geqslant \overline{b} >0, \quad \mbox{ on } [b_0, b_1] \subset [0, l],
\end{equation}
}

\noindent can now be used in the first equation of \eqref{R21} as in \cite{SW2003} but with the right arguments to prove \eqref{R21}. This allows to rigorously deduce that $v=0$ on $[b_0, b_1]$. Using this last property in the second equation of \eqref{R16}, we deduce that $u_x=0$ on $[b_0, b_1]$. Then, using both $v=u_x=0$ on $[b_0, b_1]$ in the first equation of \eqref{R16}, one rigorously deduce that $u=v=0$ on $[b_0, b_1]$.
Hence, we recover a correct proof to show that $u$ and $v$ satisfies the system of equations (2.6) in \cite{SW2003}, namely that:
\ \\
\enquote{
\begin{equation*}\label{R23}\tag{$2.6$}
\begin{cases}
Ku_{xx} -Kv_x=-\rho\, \omega^2 u, \quad \mbox{in } (0,l),\\
EIv_{xx} +Ku_x -Kv=-I_{\rho} \omega^2 v, \quad \mbox{in } (0,l),\\
v=u=0, \quad \mbox{in } (b_0,b_1),\\
u(0)=u(l)=v(0)=v(l)=0.
\end{cases}
\end{equation*}
}
\subsection{A gap in the proof of strong stability in Theorem 2.2 of \cite{SW2003}}
In \cite{SW2003}, the authors conclude quickly without any proof that $u=v=0$ is the solution of (2.6). Effectively, $u=v=0$ is {\it a} solution of (2.6). The main point is to prove that it is the only solution. Whenever rigorously proved, the property is called a "unique continuation property". 
\subsection{Completion of the proof of strong stability in Theorem 2.2 of \cite{SW2003}}
For pedagogic reasons, we shall detail a way to prove the unique continuation property in this case. For the sake of shortness, let us introduce some notation. We set
\begin{equation}\label{R24}%\tag{$C2.8$}
\alpha^2= \dfrac{\rho\, \omega^2}{K} >0,
\gamma^2= \dfrac{K}{EI} >0,
\beta^2= \dfrac{I_{\rho}\, \omega^2}{EI}>0.
\end{equation}
Then the system of equations (2.6), can be reformulated as
\begin{equation}\label{R25}%\tag{$C2.9$}
\begin{cases}
u_{xx} -v_x=-\alpha^2 u, \quad \mbox{in } (0,l),\\
v_{xx} +\gamma^2u_x=(\gamma^2 - \beta^2) v, \quad \mbox{in } (0,l),\\
v=u=0, \quad \mbox{in } (b_0,b_1),\\
u(0)=u(l)=v(0)=v(l)=0.
\end{cases}
\end{equation}
The smoothness properties of $u$ and $v$ are not given in \cite{SW2003}. Indeed at this stage, one can only assume that $U_1=(u, u_2,v,v_2)^T \in D(L)$. Thus $(u,v) \in (H^2(0,l) \cap H^1_0(0,l))^2$. 
We prove the following proposition.
\begin{prop}\label{Prop1}
The solutions $(u,v)$ of \eqref{R25} are in $(\mathcal{C}^{\infty}([0,l]))^2$. Moreover for any solution $(u,v)$ of \eqref{R25}, $u$ solves the following fourth order differential equation with constant coefficients:
\begin{equation}\label{R26}%\tag{$C2.10$}
u_{xxxx} + (\alpha^2+\beta^2)u_{xx} + \alpha^2(\beta^2 - \gamma^2)u=0\, \quad \mbox{in } (0,l)
\end{equation}

We set 
\begin{equation}\label{R27}%\tag{$C2.11$}
X_-=-\dfrac{(\alpha^2+\beta^2) + \big((\alpha^2-\beta^2)^2 +4 \alpha^2 \gamma^2\big)^{1/2}}{2} <0,
X_+=\dfrac{2\alpha^2(\gamma^2-\beta^2)}{(\alpha^2+\beta^2) + \big((\alpha^2-\beta^2)^2 +4 \alpha^2 \gamma^2\big)^{1/2}}
\end{equation}
Then the general solution of \eqref{R26} is given by
\begin{equation}\label{R28}%\tag{$C2.12$}
u(x)=A_1\cos\big(\big|X_-\big|^{1/2}x\big)+A_2\sin\big(\big|X_-\big|^{1/2}x\big) + \begin{cases}
A_3e^{x\sqrt{X_+}}+A_4 e^{-x\sqrt{X_+}}\mbox{ if } \gamma^2 > \beta^2,\\
A_3\cos\big(|X_+\big|^{1/2}x\big) + A_4\sin\big(|X_+\big|^{1/2}x\big) \mbox{ if } \gamma^2 < \beta^2,\\
A_3x+A_4 \mbox{ if } \gamma^2 = \beta^2,
\end{cases}
\end{equation}
where $A_1, A_2, A_3, A_4$ are arbitrary real constants. 
\end{prop}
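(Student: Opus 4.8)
The plan is to establish the three assertions in turn: interior $C^\infty$ smoothness by a bootstrap argument, reduction to the decoupled fourth-order equation \eqref{R26} by eliminating $v$, and explicit integration of that constant-coefficient ODE. Note that only the two differential equations of \eqref{R25} are needed here; the vanishing conditions $u=v=0$ on $(b_0,b_1)$ and the Dirichlet conditions at the endpoints are reserved for the subsequent unique continuation step that forces the constants to vanish.

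For the regularity I would argue by bootstrap. Starting from $(u,v)\in(H^2(0,l)\cap H^1_0(0,l))^2$, rewrite \eqref{R25} as $u_{xx}=v_x-\alpha^2 u$ and $v_{xx}=-\gamma^2 u_x+(\gamma^2-\beta^2)v$. If $(u,v)\in(H^k)^2$ with $k\geq 2$, the right-hand sides lie in $H^{k-1}$, so $u_{xx},v_{xx}\in H^{k-1}$, i.e. $(u,v)\in(H^{k+1})^2$. Iterating gives $(u,v)\in(H^k)^2$ for every $k$, and the one-dimensional Sobolev embedding $H^k(0,l)\hookrightarrow C^{k-1}([0,l])$ yields $(u,v)\in(C^\infty([0,l]))^2$.

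To obtain \eqref{R26} I would eliminate $v$ \emph{without} dividing by $\gamma^2-\beta^2$, so that the equal-speed case is covered uniformly. From the first equation, $v_x=u_{xx}+\alpha^2 u$; differentiating gives $v_{xx}=u_{xxx}+\alpha^2 u_x$. Equating this with the second equation yields $(\gamma^2-\beta^2)v=u_{xxx}+(\alpha^2+\gamma^2)u_x$. Differentiating once more and substituting $v_x=u_{xx}+\alpha^2 u$ produces $(\gamma^2-\beta^2)(u_{xx}+\alpha^2 u)=u_{xxxx}+(\alpha^2+\gamma^2)u_{xx}$; collecting terms, the coefficient of $u_{xx}$ is $(\alpha^2+\gamma^2)-(\gamma^2-\beta^2)=\alpha^2+\beta^2$ and that of $u$ is $\alpha^2(\beta^2-\gamma^2)$, which is exactly \eqref{R26}. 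Since the coefficients are real, the real and imaginary parts of $u$ each solve \eqref{R26}, so it suffices to describe the real solution space (the general complex solution being recovered with complex $A_i$).

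For the solution formula I would pass to the characteristic polynomial $r^4+(\alpha^2+\beta^2)r^2+\alpha^2(\beta^2-\gamma^2)=0$. Setting $X=r^2$ gives a quadratic whose discriminant simplifies to $(\alpha^2-\beta^2)^2+4\alpha^2\gamma^2$, strictly positive since $\alpha,\gamma>0$, so there are two distinct real roots $X_\pm$. The quadratic formula gives $X_-$ as in \eqref{R27}; for $X_+$ I would use the product of roots $X_+X_-=\alpha^2(\beta^2-\gamma^2)$ to recover the compact form displayed in \eqref{R27}, which also shows $\operatorname{sign}(X_+)=\operatorname{sign}(\gamma^2-\beta^2)$. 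As $X_-<0$ always, the roots $\pm\sqrt{X_-}$ are purely imaginary and contribute the $\cos,\sin$ terms of frequency $|X_-|^{1/2}$; the $X_+$-contribution then splits into the three listed cases according to whether $X_+>0$ (real exponentials), $X_+<0$ (a second trigonometric pair), or $X_+=0$ (a double root $r=0$ giving $A_3 x+A_4$). Strict positivity of the discriminant forces $X_+\neq X_-$ in every case, so the four functions are linearly independent and span, giving \eqref{R28}. None of these steps is deep; the only points that reward care are performing the elimination so as to avoid dividing by $\gamma^2-\beta^2$ — which is what lets a single equation and a single formula absorb the degenerate equal-speed case — and matching the quadratic-formula value of $X_+$ with the product-of-roots form stated in \eqref{R27}.
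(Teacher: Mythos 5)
Your proposal is correct and follows essentially the same route as the paper: a Sobolev bootstrap for $C^\infty$ regularity, elimination of $v$ to arrive at \eqref{R26}, and a case analysis of the characteristic roots (via $X=r^2$) to obtain \eqref{R28}. The only differences are cosmetic — you eliminate $v$ by isolating $(\gamma^2-\beta^2)v$ and differentiating once more, while the paper computes $v_{xxx}$ in two ways and equates; and your use of the product of roots to recover the displayed form of $X_+$, plus the remark on real versus complex constants, are small refinements of the same argument.
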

\begin{proof}
Using \eqref{R25}, we can easily deduce that $(u,v) \in (H^3(0,l) \cap H^1_0(0,l))^2$, and by induction that $(u,v) \in (H^p(0,l) \cap H^1_0(0,l))^2$ for any integer $p \in \mathbb{N}$. Using classical Sobolev embeddings, this implies that $(u,v) \in (\mathcal{C}^{\infty}([0,l]))^2$.
We differentiate twice the second equation of \eqref{R25} and use the first equation of \eqref{R25} in the resulting equation. This gives
\begin{equation}\label{R29}%\tag{$C2.13$}
v_{xxx}=-\beta^2u_{xx} +\alpha^2(\gamma^2-\beta^2)u.
\end{equation}
On the other hand, differentiating twice the first equation of \eqref{R25}, we get 
\begin{equation}\label{R30}%\tag{$C2.14$}
v_{xxx}=u_{xxxx} +\alpha^2u_{xx}.
\end{equation}
Using \eqref{R30} in \eqref{R29}, we deduce that $u$ solves \eqref{R26}.
The characteristic equation:
\begin{equation}\label{R31}%\tag{$C2.15$}
r^4 + (\alpha^2+\beta^2)r^2 + \alpha^2(\beta^2 - \gamma^2)=0
\end{equation}
associated to \eqref{R26} has the solutions $(r_1,r_2,r_3,r_4)$ given respectively by
\begin{equation}\label{R32}%\tag{$C2.16$}
r_1=-i\sqrt{\dfrac{(\alpha^2+\beta^2)+ \big(\alpha^2-\beta^2)^2 +4 \alpha^2 \gamma^2\big)^{1/2}}{2}}, r_2=-r_1.
\end{equation}
\begin{equation}\label{R33}%\tag{$C2.17$}
\begin{cases}
r_3=-i\sqrt{\dfrac{(\alpha^2+\beta^2)- \big(\alpha^2-\beta^2)^2 +4 \alpha^2 \gamma^2\big)^{1/2}}{2}}, r_4=-r_3 \mbox{ if }
\gamma^2 < \beta^2, \\
r_3=-\sqrt{\dfrac{-(\alpha^2+\beta^2)+ \big(\alpha^2-\beta^2)^2 +4 \alpha^2 \gamma^2\big)^{1/2}}{2}}, r_4=-r_3 \mbox{ if }
\gamma^2 > \beta^2, \\
r_3=r_4=0 \mbox{ if } \gamma^2 = \beta^2.
\end{cases}
\end{equation}
We deduce that the general form of the solutions of \eqref{R26} is given by \eqref{R28}.
\end{proof}
Thanks to Proposition  \ref{Prop1}, we have
\begin{cor}\label{Cor2}
The only solution $(u,v)$ of (2.6) is $(u,v)=(0,0)$.
\end{cor}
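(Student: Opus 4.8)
The plan is to leverage Proposition \ref{Prop1}, which has already performed the analytic heavy lifting: it guarantees that any solution $(u,v)$ of \eqref{R25} is $C^\infty$ on $[0,l]$ and that $u$ is given by the explicit formula \eqref{R28}, i.e.\ a linear combination of the four fundamental solutions of the constant-coefficient equation \eqref{R26}. The essential observation is that $u$ vanishes identically on the \emph{open} subinterval $(b_0,b_1)$ (third line of \eqref{R25}), and the goal is to upgrade this local vanishing to global vanishing on $(0,l)$. This is precisely a unique continuation statement, and in the present constant-coefficient setting it follows from either analyticity or from uniqueness for the Cauchy problem.

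Concretely, I would first show that $u\equiv 0$ on all of $(0,l)$, and two equivalent routes are available. On the one hand, the fundamental solutions entering \eqref{R28} (namely $\cos(|X_-|^{1/2}x)$ and $\sin(|X_-|^{1/2}x)$ together with $e^{\pm x\sqrt{X_+}}$, or the trigonometric pair, or $x$ and $1$, according to the sign of $\gamma^2-\beta^2$) are real-analytic and linearly independent on any interval of positive length; since their combination $u$ vanishes on $(b_0,b_1)$, all coefficients satisfy $A_1=A_2=A_3=A_4=0$, whence $u\equiv 0$. On the other hand --- and this is the route I would favor for its robustness --- pick any $x_0\in(b_0,b_1)$: because $u$ and all its derivatives vanish on $(b_0,b_1)$, we obtain the zero Cauchy data $u(x_0)=u_x(x_0)=u_{xx}(x_0)=u_{xxx}(x_0)=0$, and since $u$ solves the linear fourth-order ODE \eqref{R26} on the whole interval $(0,l)$, the Cauchy--Lipschitz uniqueness theorem for linear ODEs forces $u\equiv 0$ on $(0,l)$.

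Once $u\equiv 0$ is established, recovering $v$ is immediate: substituting $u\equiv 0$ (hence $u_{xx}\equiv 0$) into the first equation of \eqref{R25} gives $v_x=u_{xx}+\alpha^2 u=0$, so $v$ is constant on $(0,l)$, and the boundary condition $v(0)=0$ then yields $v\equiv 0$. Therefore the only solution of \eqref{R25}, equivalently of (2.6), is $(u,v)=(0,0)$, which is the desired claim.

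The only genuine difficulty is the unique continuation step, i.e.\ the passage from vanishing on the subinterval $(b_0,b_1)$ to vanishing on all of $(0,l)$; everywhere else the argument is mere bookkeeping. What makes this step painless is precisely the content of Proposition \ref{Prop1}: having reduced $u$ to a solution of a constant-coefficient ODE (equivalently, to an explicit analytic expression), the unique continuation collapses to either linear independence of elementary functions or ODE uniqueness, both elementary. I would therefore state the linear-independence / Cauchy-uniqueness step carefully and treat the recovery of $v$ as a one-line consequence.
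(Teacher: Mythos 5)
Your proposal is correct, and it actually contains two routes: your first route (vanishing of $u$ on $(b_0,b_1)$ forces $A_1=A_2=A_3=A_4=0$ in the explicit formula \eqref{R28}, then $v_x\equiv 0$ from the first equation of \eqref{R25} plus a boundary condition gives $v\equiv 0$) is precisely the paper's own proof. Your favored second route --- zero Cauchy data $u(x_0)=u_x(x_0)=u_{xx}(x_0)=u_{xxx}(x_0)=0$ at an interior point $x_0\in(b_0,b_1)$, then uniqueness for the linear ODE \eqref{R26} --- is a genuine variant, close in spirit to the paper's remark following the corollary (which invokes real analyticity of solutions of constant-coefficient ODEs and the vanishing of all derivatives at $b_0$), but technically different: you use Cauchy--Lipschitz uniqueness rather than analyticity. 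What your variant buys is that it bypasses the explicit solution formula \eqref{R28} entirely --- you only need the smoothness statement and equation \eqref{R26} from Proposition \ref{Prop1}, not the computation of the characteristic roots and the case distinction on the sign of $\gamma^2-\beta^2$ --- which makes it more robust and arguably cleaner than both the paper's main proof and its analyticity remark. One cosmetic difference: you conclude $v\equiv 0$ from $v_x\equiv 0$ and $v(0)=0$ alone, whereas the paper cites both boundary conditions; either suffices.
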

\begin{proof}
Let $(u,v)$ be any solution of (2.6). Then $(u,v)$ solves \eqref{R25}.
Using Proposition \ref{Prop1} we know that $u$ has the general form \eqref{R28}. Using $u\equiv 0$ on $(b_0,b_1)$, we get that $A_1=A_2=A_3=A_4=0$, so that $u \equiv 0$ on $[0,l]$. Using then the first equation of \eqref{R25}, we deduce that $v_x \equiv 0$ on $[0,l]$. Since $v(0)=v(l)=0$, we deduce that $v \equiv 0$ on $[0,l]$.
\end{proof}
\begin{oss}
One can give a shorter proof based on the fact that any solution of \eqref{R26} is real analytic, since the coefficients are constant. On the other hand, since $u \equiv 0$ on $[b_0,b_1]$, we deduce that $u^{(n)}(b_0)=0$ for any integer $n \in \mathbb{N}$. This combined with the real analyticity of $u$ imply that $u \equiv 0$ on $[0,l]$, and thus $v \equiv 0$ on on $[0,l]$.
\end{oss}

\section{On some gaps in the proof of non-uniform stability in Theorem 2.2 of \cite{SW2003} in case of distinct speeds of propagation}

The second statement in Theorem 2.2 concerns the property of non-uniform stability when 
\begin{equation}\label{DS}
\dfrac{K}{\rho} \neq \dfrac{EI}{I_{\rho}}.
\end{equation}
Under this hypothesis, Soufyane and Wehbe in \cite{SW2003},  make use of an approach presented in [6]. 

Let us describe shortly the strategy of the proof in \cite{SW2003}.

The authors first define a new operator $L_1=L-D$, where  $D$ is a suitable compact operator in the energy space. Since $D$ is compact in the energy space, the authors conclude that $r_e(e^{tL})=r_e(e^{tL_1})$, where $r_e(B)$ stand for the essential spectral radius of any bounded operator in a Banach space $G$. This allows them to consider the coupled system corresponding to the new first order system $\widehat{Y}'(t)=L_1\widehat{Y}(t)$ (instead of the original one)
where $\widehat{Y}(t)=(u,u_t,v,v_t)^T$. This new system is given by:
\begin{equation}\label{C3.1}%\tag{$C3.1$}
\begin{cases}
u_{tt}=\dfrac{K}{\rho}(u_{x}-v)_{x},  \quad t\geqslant 0, x \in (0,l), \\[1em]
v_{tt}=\dfrac{EI}{I_{\rho }}v_{xx}+\dfrac{K}{I_{\rho }}u_{x}-\dfrac{b}{I_{\rho }}v_{t}, \quad t\geqslant 0, x \in (0,l), \\
u(t,0)=u(t,l)=0,\quad v(t,0)=v(t,l)=0, \quad t \geqslant 0,\\
u(0,\cdot)=u_0(\cdot),u_t(0,\cdot)=u_1(\cdot), v(0,\cdot)=v_0(\cdot),v_t(0,\cdot)=v_1(\cdot), \quad x \in (0,l).
\end{cases} 
\end{equation}
The authors then use the Riemann invariants $(p,\varphi, q, \psi)^T$ associated to the first order system $Y'(t)=L_1Y(t)$ with initial condition $Y(0)=Y_0=:(u_0,u_1,v_0,v_1)^T$ to reformulate the above second order system as a first order hyperbolic system in the new variables.
The Riemann invariants are given by
\begin{equation}\label{C3.2}%\tag{$C3.2$}
\begin{cases}
p=-\sqrt{\dfrac{K}{\rho}}u_x + u_t, \quad q= \sqrt{\dfrac{K}{\rho}}u_x + u_t,\\
\hat{\varphi}=-\sqrt{\dfrac{EI}{I_{\rho}}}v_x + v_t, \quad \psi= \sqrt{\dfrac{EI}{I_{\rho}}}v_x + v_t.
\end{cases} 
\end{equation}
This leads to the first order system:
\begin{equation}
\partial_tZ + \hat{K}\partial_xZ +C Z=0
\end{equation}
where $Z=(p, \hat{\varphi}, q, \psi)^T$ with the homogeneous Dirichlet boundary conditions for $(p+q)$ and $\hat{\varphi} + \psi$ given at the top of page 5, and $\hat{K}$ is a diagonal square matrix of order $4$ depending on the physical constants involved in (1.3) of \cite{SW2003}, and $C$ is a square matrix of order $4$ depending on the damping coefficient $b$ and the physical constants involved in (1.3). 

\vskip 2mm

Let us define the operator:
\begin{equation}\label{C3.3}%\tag{$C3.3$}
S_1:=-(\hat{K}\partial_x +C)
\end{equation}
associated to homogeneous Dirichlet boundary conditions for $(p+q)$ and $\hat{\varphi} + \psi$ at both ends $x=0$ and $x=l$:
\begin{equation}\label{C3.4}%\tag{$C3.4$}
(p+q)(0)=(p+q)(l)=(\hat{\varphi} + \psi)(0)=(\hat{\varphi} + \psi)(l)=0.
\end{equation}
Since the two speeds of propagation are assumed to be distinct in the second statement of Theorem 2.2 in \cite{SW2003}, the authors claim that the essential spectral radius of of $L_1$ is equal to the essential spectral radius of the operator
 $L_2=-(\hat{K}\partial_x + C_0)$ associated to the boundary conditions \eqref{C3.4}, where $C_0$ is the diagonal matrix extracted from $C$ (setting all extra diagonal coefficients of $C$ to $0$). It is easy to check that the diagonal matrix $\hat{K}$ satisfies the assumptions in [6], so one can apply [6]. However the application of the appropriate theorem of [6], does not give right away the claimed result (see the second statement of Theorem 2.2 and its proof in \cite{SW2003}). Two gaps are present in the proof of this claim as explained in \cite{AB2023I}. For understanding where these gaps are appearing, we give some details about the property which is proved in [6], and the incorrect application of [6] in \cite{SW2003}.
 \begin{oss}\label{Rk4}
 To describe where the two gaps stand, we briefly recall here (a simplified) version of the framework introduced by Neves-Ribeiro-Lopes in [6], with some changes of notation to avoid confusion. Let $M$ be a given diagonal matrix with eigenvalues ordered as strictly $N$ positive ones and $n-N$ strictly negative ones, where the matrices $D, F, G, \widehat{E}$ have the appropriate lines and columns and $n\,, N \in \mathbb{N}^{\ast}$, with $n >N$.
Neves-Ribeiro-Lopes's article [6] compares the essential spectral radius of a first hyperbolic system written in a block form as 
 \begin{equation}\label{C3.5}%\tag{$C3.5$}
 \partial_t(u,v)^T + M\partial_x(u,v)^T + C(u,v)^T=0, \mbox{ on } [0,l],
 \end{equation}
 subjected to the dynamic boundary condition
 \begin{equation}\label{C3.6}%\tag{$C3.6$}
 \dfrac{d}{dt}\Big(v(t,l)-Du(t,l)
 \Big)=Fu(t,l) + Gv(t,l), 
 \end{equation}
and a boundary condition of the form
\begin{equation}\label{C3.7}%\tag{$C3.7$}
 u(t,0)=\widehat{E}v(t,0)\
 \end{equation}
\noindent to the second first hyperbolic system (also written in a block form):
\begin{equation}\label{C3.8}%\tag{$C3.8$}
 \partial_t(u,v)^T + M\partial_x(u,v)^T + C_0(u,v)^T=0, \mbox{ on } [0,l],
 \end{equation}
\begin{equation}\label{C3.9}%\tag{$C3.9$}
 v(t,l)=Du(t,l), 
 \end{equation}
and \eqref{C3.7}. The column vector functions $u$ and $v$ are respectively of size $N$ and $n-N$.

The authors of [6] denote by $C_{0}$ is the diagonal matrix extracted from $C$ (replacing the extra diagonal coefficients by $0$) and assume that the positive as well as the negative eigenvalues of the diagonal matrix $M$ are distinct. Under this assumption,
they prove that the essential spectral radius $r_e^C$ of the semigroup generated by the operator $-(M\partial_x +C)$ subjected \eqref{C3.6}-\eqref{C3.7} is equal to the essential spectral radius $r_e^{C_{0}}$ of the semigroup generated by $-(M\partial_x +C_0)$ subjected to \eqref{C3.7} and  \eqref{C3.9} (see problem (II.4)) in [6] as briefly mentioned in \cite{SW2003} at page 5).
\end{oss}

\begin{oss}\label{Gaps}
Let us now go back to the proof of the second statement in Theorem 2.2 of \cite{SW2003} in the frame of [6].
Let the matrices $D$ and $\widehat{E}$ be defined below in \eqref{C4.3}, the matrices $F$ and $G$ be defined in \eqref{C4.4}, whereas the matrices $\widehat{K}$ and $C$ are respectively defined in  \eqref{C4.1} and  \eqref{C4.2}. 
 Let us denote by $S_2$ the operator $-(\hat{K}\partial_x +C)$ subjected to the boundary conditions \eqref{C3.6}-\eqref{C3.7}. Recall that  $L_2$ is defined by $L_2=-(\hat{K}\partial_x + C_0)$ and the associated boundary conditions \eqref{C3.7} and \eqref{C3.9}. Note now, that $S_1$ is the operator $-(\hat{K}\partial_x +C)$ subjected to the boundary conditions \eqref{C3.7} and \eqref{C3.9}. Thanks to Neves-Ribeiro-Lopes [6], $r_e(e^{tS_2})=r_e(e^{tL_2})$. However, the authors of [1] assert at page 5 just after the definition of the matrix $C$, that $r_e(e^{tL_1})=r_e(e^{tL_2})$.
 Soufyane and Wehbe do not prove that $r_e(e^{tS_1})=r_e(e^{tS_2})$. This is a first gap in their proof. The second gap is that the authors do not prove that $r_e(e^{tL_1})=r_e(e^{tS_1})$. We complete these two gaps in the next section. This also requires to precise the functional setting in which these different unbounded operators should be considered, and which operators are really involved for the filling of these two gaps. This also requires to prove the invariance of the essential spectral radius under the action of certain mutiplicative transformation as will be seen in the sequel. Indeed we prove a more general result stated in Proposition \ref{UptoPi} (see also Proposition \ref{UptoPiB}). This more general result is applied twice in the present paper, a first time to complete a missing argument in the proof of Theorem A in \cite{NRL1986}, and a second time to fill a third gap in \cite{SW2003}.
 \end{oss}

\section{Complete proof of non-exponential stability of Timoshenko system for different speeds of propagation}
In this Section, we explain how to fulfill the two gaps as explained in Remark \ref{Gaps} and  some other lacks and mathematical incorrections  in the proof of the second statement in Theorem 2.2 of \cite{SW2003}. Thus we provide a complete proof for the non exponential stability of (1.3) in the case of different speeds of propagation.

For this, we recall below some classical definitions and results of spectral theory and strongly continuous semigroup properties.

\begin{oss}\label{StandardDef}
We recall that from standard definitions, the spectrum of a given operator $S$ acting on a Banach space $G$ is denoted by $\sigma(S)$ and is defined by $\sigma(S)=:\mathbb{C} \backslash \rho(S)$, where the resolvent set $\rho(S)$ is defined as
$\rho(S)=\{ \lambda \in \mathbb{C}, (\lambda Id_G - S) \mbox{ is invertible}\}$. Hence $\sigma(S)=\{ \lambda \in \mathbb{C}, (\lambda Id_G - S) \mbox{is not invertible}\}$. 
In particular the set of eigenvalues of $S$, defined by $\sigma_p(S)=:\{\lambda \in \mathbb{C},(\lambda Id_G -S) \mbox{ is not injective} \}$ is included in $\sigma(S)$. In the general case, one has $\sigma(S)=\sigma_p(S) \cup \sigma_c(S) \cup \sigma_r(S)$ where $\sigma_c(S)$ stands for the continuous spectrum, and $\sigma_r(S)$ for the residual spectrum of $S$ (see \cite{Pazy}).

The growth bound $\omega$ of a strongly continuous semigroup $(e^{tS})_{t \geqslant 0}$ generated by an infinitesimal unbounded operator $S$ acting on the Banach space $G$ is defined as
\begin{equation}\label{OMG}
\omega=\inf_{t>0}\Big(\dfrac{\log{\big(||e^{tS}||_{L(G)}\big)}}{t}\Big).
\end{equation}
The spectral radius of the bounded operator $e^{tS}$ is defined as 
\begin{equation}\label{SPR}
r(e^{tS})=\sup\{|\mu|, \mu \in \sigma(e^{tS})\}.
\end{equation}
We recall that:
 \begin{equation}\label{SRW}
r(e^{tS})=e^{\omega t} \quad \forall \ t \geqslant 0,
\end{equation}
and that the semigroup $(e^{tS})_{t \geqslant 0}$ is exponentially stable in $G$ or equivalently uniformly stable in $G$, if and only if $\omega<0$.
\end{oss}

We also need to recall the definition of the essential spectral radius and the Hadamard formula for a characterization of the essential spectral radius, that will be useful in the sequel.
\begin{oss}\label{G}
Let $G$ be any Banach space. In the sequel, we will need to deal with the norm on the quotient space of the set of all bounded linear operators on $G$ by the set of all compact operators on $G$. Let us recall the definition of this norm and a property of the essential radius of any bounded operator on $G$.
We recall that if we denote by $\mathcal{K}(G)$ the space of compact operators on $G$. The norm  defined on the quotient space $\mathcal{L}(G)/ \mathcal{K}(G)$  and denoted by $|| \cdot||_{\mathcal{K}(G)}$ 
is given by:
\begin{equation}\label{C4.23}%\tag{$C4.23$}
|| T ||_{\mathcal{K}(G)}=\inf\{ ||S+K||_{\mathcal{L}(G)}, K \in \mathcal{K}(G)\},
\end{equation}
for any $T$ in $\mathcal{L}(G)$ (see \cite{Y1965}, \cite{N1970}, \cite{K1980}),
and the following Hadamard formula holds:
\begin{equation}\label{C4.24}%\tag{$C4.24$}
r_e(T)=\lim_{n \rightarrow \infty}||T^n||_{\mathcal{K}(G)}^{1/n}, \quad \forall \ T \in \mathcal{L}(G).
\end{equation}
We also recall the classical inequality (which can be easily deduced from the Hadamard formula for $r(T)$):
\begin{equation}\label{relr}
r_e(T) \leqslant r(T) \quad \forall \ T \in \mathcal{L}(G).
\end{equation}
\end{oss}

\subsection{On some needed results on the essential spectral radius}
In \cite{AB2023I}, we pointed and shortly described  two gaps concerning the part of Theorem 2.2 of \cite{SW2003} devoted to the non exponential stability.
For filling these two gaps, we will also need to recall some properties of the essential spectral radius. For the sake of completeness and for pedagogic reasons, we shall also prove further properties of this radius, that we need as for the next Proposition.
\begin{prop}\label{Inv}
Let $(G, ||\cdot||_G)$ and $(F, ||\cdot||_F)$ be two isomorphic Banach spaces. We denote by $J \in \mathcal{L}(F,G)$ the isomorphism between $F$ and $G$. Let $R$ be any bounded operator on $G$. Then the bounded operator on $F$ defined by $S=J^{-1}RJ$ satisfies
\begin{equation}\label{Iso}
r_e(S)=r_e(R)
\end{equation}
\end{prop}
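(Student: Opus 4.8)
The plan is to prove \eqref{Iso} directly from the Hadamard characterization \eqref{C4.24} of the essential spectral radius, exploiting the fact that conjugation by the isomorphism $J$ preserves compactness. The first step is the purely algebraic identity
\begin{equation*}
S^n = (J^{-1}RJ)^n = J^{-1}R^n J, \qquad \forall\, n \in \mathbb{N},
\end{equation*}
which holds because the interior factors $J J^{-1}$ cancel telescopically. By \eqref{C4.24} applied in $F$ and in $G$ respectively, we have $r_e(S)=\lim_{n}\|S^n\|_{\mathcal{K}(F)}^{1/n}$ and $r_e(R)=\lim_{n}\|R^n\|_{\mathcal{K}(G)}^{1/n}$, so the whole matter reduces to comparing the two Calkin-type quotient norms $\|S^n\|_{\mathcal{K}(F)}$ and $\|R^n\|_{\mathcal{K}(G)}$, keeping in mind that these are norms on two \emph{different} quotient algebras $\mathcal{L}(F)/\mathcal{K}(F)$ and $\mathcal{L}(G)/\mathcal{K}(G)$.

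The heart of the argument is the observation that the map $K \mapsto J^{-1}KJ$ is a bijection from $\mathcal{K}(G)$ onto $\mathcal{K}(F)$: if $K$ is compact on $G$ then $J^{-1}KJ$ is compact on $F$ (composition of a compact operator with bounded operators is compact), and conversely every $\widetilde{K}\in\mathcal{K}(F)$ equals $J^{-1}KJ$ with $K=J\widetilde{K}J^{-1}\in\mathcal{K}(G)$. Using this, and writing any element of $\mathcal{K}(F)$ as $J^{-1}KJ$, the definition \eqref{C4.23} of the quotient norm gives
\begin{equation*}
\|S^n\|_{\mathcal{K}(F)} = \inf_{K\in\mathcal{K}(G)}\big\|J^{-1}(R^n+K)J\big\|_{\mathcal{L}(F)} \le \|J^{-1}\|_{\mathcal{L}(G,F)}\,\|J\|_{\mathcal{L}(F,G)}\,\inf_{K\in\mathcal{K}(G)}\|R^n+K\|_{\mathcal{L}(G)},
\end{equation*}
where I used the submultiplicativity of the operator norm. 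Setting $C:=\|J\|_{\mathcal{L}(F,G)}\,\|J^{-1}\|_{\mathcal{L}(G,F)}\ge 1$, the right-hand side is $C\,\|R^n\|_{\mathcal{K}(G)}$. Running the same estimate with the roles of $F$ and $G$ interchanged (via $R=JSJ^{-1}$) yields the reverse bound, so that
\begin{equation*}
C^{-1}\,\|R^n\|_{\mathcal{K}(G)} \le \|S^n\|_{\mathcal{K}(F)} \le C\,\|R^n\|_{\mathcal{K}(G)}, \qquad \forall\, n\in\mathbb{N}.
\end{equation*}

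To conclude, I would take $n$-th roots throughout and let $n\to\infty$: since $C^{1/n}\to 1$, the Hadamard limits in $F$ and $G$ coincide, giving $r_e(S)=r_e(R)$. The computation is essentially routine once the quotient-norm machinery is in place; the only point requiring genuine care — and the step I expect to be the main obstacle to write cleanly — is the matching of the two distinct Calkin algebras through $J$, i.e. verifying that conjugation is an honest bijection of the compact ideals and that the (generally nontrivial) distortion constant $C$ arising from $\|J\|\,\|J^{-1}\|\neq 1$ is harmless because it is washed out in the $n$-th-root limit.
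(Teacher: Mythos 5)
Your proposal is correct and follows essentially the same route as the paper's own proof: both rest on the Hadamard formula \eqref{C4.24}, the fact that conjugation by $J$ carries $\mathcal{K}(G)$ into $\mathcal{K}(F)$ (and back), the two-sided estimate with the distortion constant $\|J\|\,\|J^{-1}\|$, and the observation that this constant disappears after taking $n$-th roots and letting $n\to\infty$. The only cosmetic difference is that you phrase the compact-ideal correspondence as an exact bijection (yielding an equality of infima), whereas the paper settles for the one-sided inequality obtained by testing the quotient norm against operators of the form $J^{-1}KJ$; the resulting bounds and conclusion are identical.
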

\begin{proof}
\begin{equation}\label{G4.25}%\tag{$C4.25$}
r_e(S)=\lim_{n \rightarrow \infty}||S^n||_{\mathcal{K}(F)}^{1/n}= \lim_{n \rightarrow \infty}||J^{-1}R^n J||_{\mathcal{K}(F)}^{1/n}.
\end{equation}
On the other hand, noticing that for any $K \in \mathcal{K}(G)$, $J^{-1}K J \in \mathcal{K}(F)$, we have for any $n \in \mathbb{N}^{\ast}$
\begin{equation}\label{G4.26}%\tag{$C4.26$}
||J^{-1}R^n J||_{\mathcal{K}(F)} \leqslant || J^{-1}R^n J + J^{-1}K J||_{\mathcal{L}(F)} \leqslant ||J^{-1}||\; ||R^n +K ||_{\mathcal{L}(G)}
\;||J|| , \quad \forall \ K \in \mathcal{K}(G).
\end{equation}
Hence, taking the infimum over all $K \in \mathcal{K}(G)$, we obtain
\begin{equation}\label{G4.27}%\tag{$C4.27$}
||J^{-1}R^n J||_{\mathcal{K}(F)} \leqslant  ||J^{-1}|| \;||R^n||_{\mathcal{K}(G)}\;||J||,
\end{equation}
so that
$$
||J^{-1}R^n J||_{\mathcal{K}(F)}^{1/n} \leqslant  (||J^{-1}||\; ||J|| )^{1/n}\; ||R^n||_{\mathcal{K}(G)}^{1/n},
$$
Taking the limit as $n$ goes to $\infty$, we get
\begin{equation}\label{G4.28}%\tag{$C4.28$}
r_e(S) \leqslant r_e(R).
\end{equation}
In a similar way and since for any $K \in \mathcal{K}(F)$, one has $JKJ^{-1} \in \mathcal{K}(G)$, we get
\begin{equation}\label{G4.29}%\tag{$C4.28$}
||R^n||_{\mathcal{K}(G)} \leqslant || J(J^{-1}R^n J)J^{-1} + JKJ^{-1}||_{\mathcal{L}(G)} \leqslant ||J||\; || J^{-1}R^nJ +K ||_{\mathcal{L}(F)}\; ||J^{-1}||, \quad \forall \ K \in \mathcal{K}(F).
\end{equation}
Hence, taking the infimum over all $K \in \mathcal{K}(F)$, we obtain
\begin{equation}\label{G4.30}%\tag{$C4.29$}
||R^n||_{\mathcal{K}(G)} \leqslant ||J||\; ||J^{-1}R^nJ ||_{\mathcal{K}(F)}\; ||J^{-1}||,
\end{equation}
so that
\begin{equation}\label{Gb4.30}%\tag{$C4.30$}
||R^n||_{\mathcal{K}(G)}^{1/n} \leqslant (||J^{-1}||\; ||J|| )^{1/n}\;  ||J^{-1}R^nJ ||_{\mathcal{K}(F)}^{1/n}, \quad \forall n \in \mathbb{N}^{\ast}.
\end{equation}
Taking the limit as $n$ goes to infinity we deduce that
\begin{equation}\label{G4.31}%\tag{$C4.31$}
r_e(R) \leqslant r_e(S).
\end{equation}
We conclude using \eqref{G4.28} and \eqref{G4.31}.
\end{proof}

\begin{prop}\label{UptoPi}
Let $(G, \langle\cdot,\cdot\rangle_G, \langle \cdot,\cdot \rangle_G, ||\cdot||_G)$  be a Hilbert space of infinite dimension, and $G_0 \subset G$ a closed vectorial subspace of $G$. We denote by $P$ the orthogonal projection of $G$ on $G_0$.  We recall that $P \in \mathcal{L}(G)$. Then we have the following property
\begin{equation}\label{PIradius}
r_e(R\,P)=r_e(R), \quad \forall \ R \in \mathcal{L}(G_0).
\end{equation}
\end{prop}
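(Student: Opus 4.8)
The plan is to use the Hadamard characterisation \eqref{C4.24} of the essential spectral radius to reduce \eqref{PIradius} to a single identity between the quotient norms of \eqref{C4.23}, and then to establish that identity by two opposite inequalities, built respectively from a zero-extension and a compression of compact operators.

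First I would record the block structure relative to the orthogonal decomposition $G = G_0 \oplus G_0^{\perp}$. For $g = g_0 + g_1$ with $g_0 \in G_0$ and $g_1 \in G_0^{\perp}$ one has $Pg = g_0$, hence $RPg = Rg_0 \in G_0$; thus $RP$ is block-diagonal, acting as $R$ on $G_0$ and as $0$ on $G_0^{\perp}$. An immediate induction gives $(RP)^n = R^n P$, that is, the zero-extension of $R^n \in \mathcal{L}(G_0)$ to the whole of $G$. By \eqref{C4.24} it therefore suffices to prove, for an arbitrary $T \in \mathcal{L}(G_0)$ (which we shall later specialise to $T = R^n$),
\[
\|TP\|_{\mathcal{K}(G)} = \|T\|_{\mathcal{K}(G_0)},
\]
where $TP \in \mathcal{L}(G)$ denotes the zero-extension of $T$; taking $n$-th roots and letting $n \to \infty$ then yields \eqref{PIradius}.

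For the inequality $\|TP\|_{\mathcal{K}(G)} \leq \|T\|_{\mathcal{K}(G_0)}$ I would argue as follows. Given any $K_0 \in \mathcal{K}(G_0)$, its zero-extension is compact on $G$ (its range lies in the closed subspace $G_0$ and is relatively compact), and a block-diagonal operator with a vanishing block has $\mathcal{L}(G)$-norm equal to the norm of its nonzero block. Hence $\|TP\|_{\mathcal{K}(G)} \leq \|T + K_0\|_{\mathcal{L}(G_0)}$, and taking the infimum over $K_0 \in \mathcal{K}(G_0)$ gives the claim.

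The reverse inequality is the only delicate point, and it is where I expect the main obstacle to lie: one must manufacture a compact operator on the smaller space $G_0$ out of an arbitrary compact operator $K$ on the larger space $G$. Letting $j : G_0 \hookrightarrow G$ be the isometric inclusion, I would set $K_0 := P K j \in \mathcal{L}(G_0)$, which is compact since composing a compact operator with bounded operators on either side preserves compactness. Since $TP$ acts as $T$ on $G_0$ and $Pj = \mathrm{Id}_{G_0}$, one has $P(TP + K)j = T + K_0$; as $P$ and $j$ are contractions, this yields $\|T + K_0\|_{\mathcal{L}(G_0)} \leq \|TP + K\|_{\mathcal{L}(G)}$, whence $\|T\|_{\mathcal{K}(G_0)} \leq \|TP + K\|_{\mathcal{L}(G)}$. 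Taking the infimum over $K \in \mathcal{K}(G)$ gives $\|T\|_{\mathcal{K}(G_0)} \leq \|TP\|_{\mathcal{K}(G)}$, completing the identity and hence the proposition. Note that, unlike the isometric conjugation used in the proof of Proposition \ref{Inv}, here $P$ is only a norm-one \emph{non-invertible} projection, so the two inequalities genuinely require two different constructions (zero-extension versus compression) rather than a single similarity.
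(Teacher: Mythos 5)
Your proof is correct and takes essentially the same route as the paper's: both rest on the Hadamard formula \eqref{C4.24}, the identity $(RP)^n=R^nP$, and the same two transfers of compact operators, namely the compression $PK_{|G_0}$ (your $PKj$) for one inequality and the zero-extension $K_0P$ for the other. The only difference is organizational: you first package the two bounds as the exact quotient-norm identity $||TP||_{\mathcal{K}(G)}=||T||_{\mathcal{K}(G_0)}$ (using $||P||_{\mathcal{L}(G)}=||j||=1$) and then specialize to $T=R^n$, whereas the paper argues directly with $R^n$ and lets the harmless factor $||P||_{\mathcal{L}(G)}^{1/n}\rightarrow 1$ vanish in the limit.
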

\begin{proof}

We denote by $\mathcal{K}(G)$ and respectively $\mathcal{K}(G_0)$ the set of all linear compact operators on $G$, respectively on $G_0$. Let $R \in \mathcal{L}(G_0)$ be given arbitrarily.
Let us first prove by induction that for any $n \in \mathbb{N}^{\ast}$, the following property holds:
\begin{equation}\label{Rn}
(R\,P)^n=R^n\, P. 
\end{equation}
The property \eqref{Rn} holds for $n=1$. Let us assume that it holds for $n$. We have, thanks to our induction hypothesis
$$
(R\,P)^{n+1}= (R\,P)^n (R\,P)= R^n\, P (R\,P)
$$
Since $(R\,P)G \subset G_0$ and $P_{|G_0}=Id_{G_0}$, we deduce that $PR\,P=R\,P$. Hence, we have
$$
(R\,P)^{n+1}= R^n\, R\,P= (R)^{n+1}\,P,
$$
so that \eqref{Rn} holds for any positive integer $n$. 
Let us now first prove that $r_e(R) \leqslant r_e(R\,P)$. For this we will use the characterization \eqref{C4.24} of the essential spectral radius. Hence, we have
$$
r_e(R)=\lim_{n \rightarrow \infty}||R^n||_{\mathcal{K}(G_0)}^{1/n}
$$
Let $K \in \mathcal{K}(G)$ be given arbitrarily. We set $\widehat{K_0}=P\, K_{|G_0}$. Then it is easy to check that $\widehat{K_0} \in \mathcal{K}(G_0)$. Thus we have for all $n \in \mathbb{N}^{\ast}$:
\begin{equation}\label{RnKO}
||R^n||_{\mathcal{K}(G_0)} \leqslant ||R^n P_{|G_0} + \widehat{K_0}||_{\mathcal{L}(G_0)} =\sup_{U \in G_0, U \neq 0} \Big(\dfrac{||R^nPU + P K U||_{G_0}}
{||U||_{G_0}}\Big).
\end{equation}
Since for all $U \in G$ and all $n \in \mathbb{N}^{\ast}$, $R^nPU \in G_0$, we have
$$
R^nPU + P K U=PR^nPU + P K U \in G_0 \quad \forall \ U \in G
$$
Using this identity in \eqref{RnKO}, we obtain
\begin{multline*}
||R^n||_{\mathcal{K}(G_0)} \leqslant \sup_{U \in G_0, U \neq 0} \Big(\dfrac{||PR^nPU + P K U||_{G}}
{||U||_{G}}\Big) \leqslant 
\sup_{U \in G, U \neq 0} \Big(\dfrac{||P\big(R^nPU + K U\big)||_{G}}
{||U||_{G}}\Big) \leqslant \\
||P||_{\mathcal{L}(G)} \sup_{U \in G, U \neq 0} \Big(\dfrac{||(R^nP+K)U||_{G}}
{||U||_{G}}\Big)= ||P||_{\mathcal{L}(G)} ||R^nP + K ||_{\mathcal{L}(G)}, \quad \forall \ K \in \mathcal{K}(G), \forall \  n \in \mathbb{N}^{\ast}.
\end{multline*}
Thus, taking the infimum over all $K \in \mathcal{K}(G)$, on both sides of the above inequality and using \eqref{Rn}, we deduce that
$$
||R^n||_{\mathcal{K}(G_0)} \leqslant ||P||_{\mathcal{L}(G)} ||R^nP||_{\mathcal{K}(G)}= ||P||_{\mathcal{L}(G)} ||(RP)^n||_{\mathcal{K}(G)} \quad \forall \ n \in \mathbb{N}^{\ast}, 
$$
so that
$$
||R^n||^{1/n}_{\mathcal{K}(G_0)} \leqslant ||P||^{1/n}_{\mathcal{L}(G)} ||(RP)^n||^{1/n}_{\mathcal{K}(G)} \quad \forall \ n \in \mathbb{N}^{\ast}
$$
Taking the limit as $n$ goes to infinity on both sides of the above inequality, we get
\begin{equation}\label{reinf}
r_e(R) \leqslant r_e(R\,P)
\end{equation}
Let us now  prove the reverse inequality. Let $K_0 \in \mathcal{K}(G_0)$ be given arbitrarily. We set $K=K_0P$. Then it is easy to check that $K \in \mathcal{K}(G)$. Thus we have
\begin{multline*}\label{RnK0}
||(RP)^n||_{\mathcal{K}(G)} = ||R^nP||_{\mathcal{K}(G)}\leqslant ||R^n P+ K||_{\mathcal{L}(G)} = \\
||(R^n + K_0)P||_{\mathcal{L}(G)} \leqslant 
||P||_{\mathcal{L}(G)} ||R^n + K_0||_{\mathcal{L}(G_0)} \quad \forall \ K_0 \in \mathcal{K}(G_0), \forall \ n  \in \mathbb{N}^{\ast}.
\end{multline*}
Taking the infimum over all $K_0 \in \mathcal{K}(G_0)$, we get
\begin{equation*}\label{RnK0}
||(RP)^n||_{\mathcal{K}(G)} \leqslant 
||P||_{\mathcal{L}(G)} ||R^n||_{\mathcal{K}(G_0)}, \forall \  n \in \mathbb{N}^{\ast},
\end{equation*}
so that 
\begin{equation*}
||(RP)^n||^{1/n}_{\mathcal{K}(G)} \leqslant 
||P||^{1/n}_{\mathcal{L}(G)} ||R^n||^{1/n}_{\mathcal{K}(G_0)},\forall \  n \in \mathbb{N}^{\ast}.
\end{equation*}
Taking the limit as $n$ goes to infinity on both sides of the above inequality, we get
\begin{equation}\label{resup}
r_e(R\,P) \leqslant r_e(R)
\end{equation}
We conclude the proof thanks to \eqref{reinf} and \eqref{resup}.
\end{proof}
\begin{oss}\label{Utopi}
We observe that we used only some properties among the assumptions of Proposition \ref{UptoPi}, so that we can formulate in the next Proposition our result in a more general frame.
\end{oss}
\begin{prop}\label{UptoPiB}
Let $(G, ||\cdot||_G)$  be a Banach space of infinite dimension, and $G_0 \subset G$ a closed vectorial subspace of $G$. Let $P \in \mathcal{L}(G)$ be such that the following properties holds:
\begin{equation}\label{PBanach}
\begin{cases}
P_{|G_0}=Id_{|G_0},\\
PG \subset G_0,
\end{cases}
\end{equation}
Then \eqref{PIradius} holds.
\end{prop}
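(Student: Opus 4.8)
The plan is to observe that the proof of Proposition \ref{UptoPi} never used the Hilbert space structure of $G$ nor the orthogonality of $P$ beyond a few purely algebraic facts, so that the identical argument goes through verbatim under the weaker hypotheses \eqref{PBanach}. First I would record that the two assumptions $P_{|G_0}=Id_{|G_0}$ and $PG \subset G_0$ already force $P$ to be a bounded idempotent onto $G_0$: for any $U \in G$ one has $PU \in G_0$, hence $P(PU)=PU$, i.e. $P^2=P$. This idempotency (onto $G_0$, but possibly oblique) is exactly the structural content that the orthogonal projection contributed in the earlier proof.

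The key algebraic step is the identity $(RP)^n=R^nP$ for every $n \in \mathbb{N}^{\ast}$, which I would prove by induction exactly as before: writing $(RP)^{n+1}=R^nP(RP)$ and noting that $(RP)G \subset G_0$ together with $P_{|G_0}=Id_{G_0}$ gives $PRP=RP$, one obtains $(RP)^{n+1}=R^{n+1}P$. Only the two hypotheses collected in \eqref{PBanach} enter here. I would then reproduce the two compactness-preservation observations: for $K \in \mathcal{K}(G)$, the operator $\widehat{K_0}:=P\,K_{|G_0}$ lies in $\mathcal{K}(G_0)$, being the composition of the compact operator $K_{|G_0}:G_0 \to G$ with the bounded operator $P:G \to G_0$; conversely, for $K_0 \in \mathcal{K}(G_0)$, the operator $K=K_0P:G \to G_0 \subset G$ lies in $\mathcal{K}(G)$ as a bounded operator followed by a compact one. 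Both invocations use only the boundedness $P \in \mathcal{L}(G)$ and the ideal property of $\mathcal{K}$, never an inner product.

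With these in hand, the two chains of norm inequalities from the proof of Proposition \ref{UptoPi} carry over unchanged, yielding
\begin{equation*}
||R^n||_{\mathcal{K}(G_0)} \leqslant ||P||_{\mathcal{L}(G)}\, ||(RP)^n||_{\mathcal{K}(G)}, \qquad ||(RP)^n||_{\mathcal{K}(G)} \leqslant ||P||_{\mathcal{L}(G)}\, ||R^n||_{\mathcal{K}(G_0)},
\end{equation*}
for all $n \in \mathbb{N}^{\ast}$. Raising to the power $1/n$ and letting $n \to \infty$, the factor $||P||_{\mathcal{L}(G)}^{1/n} \to 1$, so the Hadamard formula \eqref{C4.24}, valid in any Banach space as recalled in Remark \ref{G}, delivers both $r_e(R) \leqslant r_e(RP)$ and $r_e(RP) \leqslant r_e(R)$, hence \eqref{PIradius}.

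The only real point requiring care, and the step I would double-check most attentively, is that the self-adjointness and norm-one property of the orthogonal projection are genuinely dispensable. The earlier proof already carried the factor $||P||_{\mathcal{L}(G)}$ throughout rather than silently setting it to $1$, and it accessed $P$ only through $P^2=P$, $P_{|G_0}=Id_{G_0}$, $PG \subset G_0$ and boundedness. Since an arbitrary bounded idempotent onto $G_0$ supplies precisely these, no estimate degrades; the generalization thus requires no new idea beyond the harmless bookkeeping of the constant $||P||_{\mathcal{L}(G)}$, which disappears in the $n$-th root limit.
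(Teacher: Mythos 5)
Your proposal is correct and takes essentially the same route as the paper: the paper's entire ``proof'' of Proposition \ref{UptoPiB} is Remark \ref{Utopi}, which simply observes that the proof of Proposition \ref{UptoPi} used only the properties collected in \eqref{PBanach} (boundedness of $P$, $P_{|G_0}=Id_{G_0}$, $PG\subset G_0$) and never the inner product, self-adjointness, or norm-one property of the orthogonal projection. Your step-by-step verification --- the idempotency $P^2=P$, the identity $(RP)^n=R^nP$, the two compactness checks via the ideal property of $\mathcal{K}$, and the harmlessness of the factor $||P||_{\mathcal{L}(G)}^{1/n}\to 1$ in the Hadamard limit --- is exactly the check the paper leaves implicit, so nothing is missing.
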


\subsection{Fulfilling of the first gap in \cite{SW2003} in the proof of non-exponential stability in the case of non equal speed of propagation}

This subsection is devoted to a complete rigorous proof that $r_e(e^{tS_1})=r_e(e^{tS_2})$.

We set $N=2$, $n=4$. In the sequel the column vectorial functions  $u$ and $v$ take values in $\mathbb{R}^2$, whereas $z$ will be a column vector in $\mathbb{R}^2$.
In all the sequel we identify the line vectors, with their corresponding transpose column vectors.

\begin{dfn}
We set
\begin{equation}\label{C4.1}%\tag{$C4.1$}
\widehat{K}=%
\begin{pmatrix}
\sqrt{K/\rho} & 0  & 0 & 0 \\ 
0 & \sqrt{E I / I_{\rho}} & 0  & 0 \\
0  & 0 & -\sqrt{K/\rho} & 0 \\
0  & 0 & 0 & -\sqrt{E I / I_{\rho}} 
\end{pmatrix}%
\end{equation}
  \begin{equation}\label{C4.2}%\tag{$C4.2$}
C=%
\begin{pmatrix}
0 & -\dfrac{K}{2\rho} \, \sqrt{\dfrac{I_{\rho}}{EI} }& 0  & \dfrac{K}{2\rho} \, \sqrt{\dfrac{I_{\rho}}{EI} }\\ 
\dfrac{\sqrt{\rho K}}{2I_{\rho}} & \dfrac{b}{2 I_{\rho}} & - \dfrac{\sqrt{\rho K}}{2I_{\rho}} & \dfrac{b}{2 I_{\rho}} \\
0 &  -\dfrac{K}{2\rho} \, \sqrt{\dfrac{I_{\rho}}{EI} } & 0 & \dfrac{K}{2\rho} \, \sqrt{\dfrac{I_{\rho}}{EI} } \\
 \dfrac{\sqrt{\rho K}}{2I_{\rho}} &  \dfrac{b}{2 I_{\rho}} & - \dfrac{\sqrt{\rho K}}{2I_{\rho}} & \dfrac{b}{2 I_{\rho}}
\end{pmatrix}%
\end{equation}
  \begin{equation}\label{C0}%\tag{$C4.2$}
C_0=%
\begin{pmatrix}
0 & 0 & 0  & 0\\ 
0& \dfrac{b}{2 I_{\rho}} & 0 & 0\\
0 &  0 & 0 & 0 \\
0 &  0 & 0 & \dfrac{b}{2 I_{\rho}}
\end{pmatrix}%
\end{equation}
 \begin{equation}\label{C4.3}%\tag{$C4.3$}
D=\widehat{E}=%
\begin{pmatrix}
-1 & 0  \\ 
0 & -1 
\end{pmatrix}%
\end{equation}
  \begin{equation}\label{C4.4}%\tag{$C4.4$}
F=G=%
\begin{pmatrix}
0 & 0  \\ 
0 & 0 
\end{pmatrix}%
\end{equation}
We introduce the following spaces.
\begin{equation}\label{C4.5}%\tag{$C4.5$}
%\begin{cases}
X=(L^2(0,l))^4, H=X \times \mathbb{R}^2, V=(H^1(0,l))^4, W=V \times \mathbb{R}^2, H_0=\{(u,v,z) \in H, z=0_{\mathbb{R}^2}\},
\end{equation}
We equip these spaces with the following norms:
\begin{equation}\label{C4.6}%\tag{$C4.6$}
\begin{cases}
||(u,v)||_X=\big(\sum_{i=1}^2 (||u_i||_{L^2(0,l)}^2 + ||v_i||_{L^2(0,l)}^2)\big)^{1/2}, \quad \forall \ (u,v)=((u_1,u_2), (v_1,v_2)) \in X, \\
||(u,v,z)||_H=\big(||(u,v)||_X^2 + \sum_{i=1}^2z_i^2\big)^{1/2},\quad \forall \ (u,v,z)=((u_1,u_2), (v_1,v_2), (z_1,z_2)) \in H.
\end{cases}
\end{equation}
Note that $H_0$ is continuously embedded in $H$. It is easy to check that $X$, $H$ and $H_0$ are Hilbert spaces ($H_0$ being a closed subspace of $H$). 
Note also, that if we go back to the use of the Riemann invariants for the reformulation of the Timoshenko system as a coupled first order system, we have 
$$
u=(p,\widehat{\varphi}), v=(q, \psi).
$$
We define the following bounded operator on $H$:
\begin{equation}\label{C4.7}%\tag{$C4.7$}
\Pi: H \longrightarrow H_0, \quad \Pi(u,v,z)^T:=(u,v,0_{\mathbb{R}^2})^T \quad \forall \ (u,v,z) \in H,
\end{equation}
We define the following isometry:
\begin{equation}\label{C4.8}%\tag{$C4.8$}
J: H_0 \longrightarrow X, \quad J(u,v,0_{\mathbb{R}^2})^T=(u,v)^T \quad \forall \ (u,v) \in X,
\end{equation}
and we denote by $J^{-1}$ the inverse operator of $J$. 
We define the following unbounded operator $S_{1,C}$ on $X$:
\begin{equation}\label{C4.9}%\tag{$C4.9$}
D(S_{1,C})=\{(u,v) \in V, u(0)=\widehat{E}v(0), v(l)=Du(l)\}, S_{1,C}=-(\widehat{K}\partial_x +C),
\end{equation}
We define the following unbounded operator $S_{2,C}$ on $H$:
\begin{equation}\label{C4.10}%\tag{$C4.10$}
D(S_{2,C})=\{(u,v,z) \in W, u(0)=\widehat{E}v(0), z=v(l)-Du(l))\}, S_{2,C}=
-\begin{pmatrix}
\widehat{K}\partial_x +C  & 0_{\mathbb{R}^2}\\
0_X & 0_{\mathbb{R}^2}
\end{pmatrix}.
\end{equation}
The authors of [6] recall in Theorem B that $S_{2,C}$ and $S_{1,C_0}$ generate strongly continuous semigroups respectively on $H$ and $X$ (see Theorem B, page 5). Since $S_{1,C}=S_{1,C_0} -(C-C_0)$ and $(C-C_0) \in \mathcal{L}(X)$, we deduce that $S_{1,C}$ is a bounded perturbation of $S_{1,C_0}$, thus it also generates a strongly continuous semigroup on $X$.
We denote by $(e^{tS_{i,C}})_{t \geqslant 0}$ the strongly continuous semigroups generated by the operator $S_{i,C}$ for $i=1,2$.
\end{dfn}
\begin{oss}\label{L2C_0}
Note that the operator presented as the operator $L_2$ in \cite{SW2003} (see page 5) corresponds to the operator denoted here as $S_{1,C_0}$.
\end{oss}

\begin{prop}\label{P1}
Let $S_{1,C}$, and $S_{2,C}$ being respectively defined by \eqref{C4.9} and \eqref{C4.10}. Then we have:
\begin{equation}\label{C4.11}%\tag{$C4.11$}
r_e(e^{tS_{1,C}})=r_e(e^{tS_{2,C}}), \quad \forall \ t \geqslant 0.
\end{equation}
\end{prop}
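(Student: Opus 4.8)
The plan is to exploit the block structure of the semigroup $(e^{tS_{2,C}})_{t\geq 0}$ on $H=X\times\mathbb{R}^2$ and to reduce $r_e(e^{tS_{2,C}})$ to $r_e(e^{tS_{1,C}})$ by combining the two invariance results, Proposition \ref{Inv} and Proposition \ref{UptoPi}, with the invariance of the essential spectral radius under compact perturbations.

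First I would analyse the generator. Since by \eqref{C4.10} the last $\mathbb{R}^2$-component of $S_{2,C}(u,v,z)^T$ is identically $0$, any trajectory $Y(t)=(u(t),v(t),z(t))^T$ of $\dot Y=S_{2,C}Y$ satisfies $\dot z\equiv 0$, hence $z(t)=z_0$ for all $t$; by density this propagates to the whole semigroup. Writing $H=X\times\mathbb{R}^2$, this shows that $e^{tS_{2,C}}$ is block upper-triangular,
\[
e^{tS_{2,C}}=\begin{pmatrix} A(t) & B(t)\\ 0 & Id_{\mathbb{R}^2}\end{pmatrix},
\qquad A(t)\in\mathcal{L}(X),\ B(t)\in\mathcal{L}(\mathbb{R}^2,X).
\]
Moreover the domain constraint $z=v(l)-Du(l)$ shows that $H_0=\{z=0\}$ is invariant, and that for data in $D(S_{2,C})\cap H_0$ the boundary condition becomes the homogeneous one $v(t,l)=Du(t,l)$, while the $(u,v)$-equation reads $\partial_t(u,v)=-(\widehat{K}\partial_x+C)(u,v)$ with $u(t,0)=\widehat{E}v(t,0)$. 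Transporting $H_0$ onto $X$ through the isometry $J$ of \eqref{C4.8}, this is exactly the evolution generated by $S_{1,C}$, which yields the key identity $A(t)=e^{tS_{1,C}}$, i.e. $e^{tS_{2,C}}|_{H_0}=J^{-1}e^{tS_{1,C}}J$.

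Next I would strip off the finite-rank part. Since $B(t)$ has domain $\mathbb{R}^2$ and $Id_{\mathbb{R}^2}$ acts on a two-dimensional space, the operator $\bigl(\begin{smallmatrix} 0 & B(t)\\ 0 & Id_{\mathbb{R}^2}\end{smallmatrix}\bigr)$ has finite rank, hence is compact on $H$. Writing $\widetilde A(t)\in\mathcal{L}(H_0)$ for $\widetilde A(t)(u,v,0)^T=(A(t)(u,v),0)^T$, one checks that $\bigl(\begin{smallmatrix} A(t) & 0\\ 0 & 0\end{smallmatrix}\bigr)=\widetilde A(t)\,\Pi$ with $\Pi$ the orthogonal projection of \eqref{C4.7}, so that $e^{tS_{2,C}}=\widetilde A(t)\,\Pi+K_t$ with $K_t$ compact. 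Because the compact operators form a two-sided ideal, $(\widetilde A(t)\Pi+K_t)^n=(\widetilde A(t)\Pi)^n+\widehat K_n$ with $\widehat K_n$ compact, and the Calkin quotient norm entering the Hadamard formula \eqref{C4.24} is insensitive to compact summands; hence $r_e(e^{tS_{2,C}})=r_e(\widetilde A(t)\,\Pi)$.

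Finally I would invoke the two propositions in turn. Proposition \ref{UptoPi}, with $G=H$, $G_0=H_0$ and $P=\Pi$, gives $r_e(\widetilde A(t)\,\Pi)=r_e(\widetilde A(t))$; and since $\widetilde A(t)=e^{tS_{2,C}}|_{H_0}=J^{-1}e^{tS_{1,C}}J$, Proposition \ref{Inv} applied to the isomorphism $J\in\mathcal{L}(H_0,X)$ gives $r_e(\widetilde A(t))=r_e(e^{tS_{1,C}})$. Chaining these equalities proves \eqref{C4.11}. The step I expect to be the main obstacle, and the one I would treat most carefully, is the rigorous justification of the block-triangular identity $e^{tS_{2,C}}|_{H_0}=J^{-1}e^{tS_{1,C}}J$: one must verify that the dynamic boundary condition \eqref{C3.6} (here with $F=G=0$) indeed freezes $z$, that $H_0$ is genuinely invariant, and that $D(S_{2,C})\cap H_0$ is carried by $J$ onto $D(S_{1,C})$ with $S_{2,C}$ acting there precisely as $S_{1,C}$.
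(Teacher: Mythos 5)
Your proposal is correct and follows essentially the same route as the paper: your operator $\widetilde A(t)\,\Pi + K_t$ is exactly the paper's decomposition $e^{tS_{2,C}} = \bigl(J^{-1}e^{tS_{1,C}}J\bigr)\Pi + e^{tS_{2,C}}(Id_H-\Pi)$, with the finite-rank remainder handled by compactness, followed by Proposition \ref{UptoPi} (with $G=H$, $G_0=H_0$, $P=\Pi$) and Proposition \ref{Inv} (with the isometry $J$) in the same order. The key step you flag as needing care, namely $e^{tS_{2,C}}|_{H_0}=J^{-1}e^{tS_{1,C}}J$, is precisely what the paper establishes, doing so by comparing the two Cauchy problems for data in $D(S_{1,C})\times\mathbb{R}^2$ and invoking uniqueness of solutions, then extending by density.
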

\begin{proof}
Let us remark that for any $(u_0,v_0) \in D(S_{1,C})$ and any $z_0 \in \mathbb{R}^2$, $\Pi(u_0,v_0, z_0)^T \in D(S_{2,C})$.
Since $D(S_{1,C})$ is dense in $X$, for any $(u_0,v_0) \in X$ there exists a sequence $(u_0^n,v_0^n)_n \subset D(S_{1,C})$ which converges to $(u_0,v_0)$ in $X$. 
Therefore for any  $(u_0,v_0,z_0) \in H$, there exists a sequence $(u_0^n,v_0^n)_n \in D(S_{1,C})$ such that $\Pi(u_0^n,v_0^n, z_0)_n \subset D(S_{2,C})$ and $\Pi(u_0^n,v_0^n, z_0)$ converges to
$\Pi(u_0,v_0, z_0)$ in $H$ as $n$ goes to $\infty$.

Let $(u_0,v_0,z_0) \in D(S_{1,C}) \times \mathbb{R}^2$ be given. We set
\begin{equation}\label{C4.12}%\tag{$C4.12$}
(u_1(t),v_1(t))^T=e^{tS_{1,C}}(u_0,v_0)^T, \quad \forall \ t \geqslant 0,
\end{equation}
and
\begin{equation}\label{C4.13}%\tag{$C4.13$}
(u_2(t),v_2(t), z_2(t))^T=e^{tS_{2,C}}\Pi (u_0,v_0,z_0)^T, \quad \forall \ t \geqslant 0,
\end{equation}
where $\Pi$ is defined in \eqref{C4.7}.
By definition of the operators $S_{i,C}$ for $i=1,2$, this means that $(u_1,v_1)^T$ is the unique solution of
\begin{equation}\label{C4.14}%\tag{$C4.14$}
(u_1^{\prime}(t),v_1^{\prime}(t))^T=-\Big(\widehat{K}( u_{1,x}, v_{1,x})^T + C (u_1,v_1)^T), \quad (u_1(0),v_1(0))^T=(u_0,v_0)^T,
\end{equation}
and that $(u_2,v_2,z_2)^T$ is the unique solution of
\begin{equation}\label{C4.15}%\tag{$C4.15$}
(u_2^{\prime}(t),v_2^{\prime}(t), z_2^{\prime}(t))^T=-\Big(\big(\widehat{K}(u_{2,x}, v_{2,x})^T + C (u_2,v_2)^T\big)^T, 0_{\mathbb{R}^2}\Big)^T, \quad (u_2(0),v_2(0),z_2(0)))^T=(u_0,v_0, 0_{\mathbb{R}^2})^T.
\end{equation}
We note that \eqref{C4.15} is equivalent to
\begin{equation}\label{C4.16}%\tag{$C4.16$}
(u_2^{\prime}(t),v_2^{\prime}(t))^T=-\Big(\widehat{K}( u_{2,x}, v_{2,x})^T + C (u_2,v_2)^T), \quad (u_2(0),v_2(0))^T=(u_0,v_0)^T,
z_2 \equiv 0_{\mathbb{R}^2},
\end{equation}
so that by uniqueness of the solution of \eqref{C4.14}, we have $u_1 \equiv u_2$, $v_1 \equiv v_2$. 

Thus, we proved that for all $(u_0,v_0,z_0) \in D(S_{1,C})\times \mathbb{R}^2$, the following identity holds
\begin{equation}\label{C4.17}%\tag{$C4.17$}
e^{tS_{2,C}}\Pi (u_0,v_0,z_0)^T=(u_1(t),v_1(t), 0_{\mathbb{R}^2})^T= \Big((e^{tS_{1,C}}(u_0,v_0)^T)^T, 0_{\mathbb{R}^2}\Big)^T, \quad \forall \ t \geqslant 0.
\end{equation}
We easily extend this identity for all $(u_0,v_0,z_0) \in H$ by density of $D(S_{1,C})$ in $X$ and the continuity of $\Pi$ and the involved semigroups.
On the other hand, for all $(u_0,v_0,z_0) \in H$, we have
\begin{equation}\label{C4.18}%\tag{$C4.18$}
\Big(J^{-1}e^{tS_{1,C}}J\, \Pi\Big)(u_0,v_0,z_0)^T=\Big( (e^{tS_{1,C}}(u_0,v_0)^T)^T, 0_{\mathbb{R}^2}\Big)^T, \quad \forall \ t \geqslant 0,
\end{equation}
where $J$ is defined in \eqref{C4.8}.
The two above identities lead for all $(u_0,v_0,z_0) \in H$
\begin{multline}\label{C4.19}%\tag{$C4.19$}
\Big(e^{tS_{2,C}}- \Big(J^{-1}e^{tS_{1,C}}J\, \Pi\Big)\Big)(u_0,v_0,z_0)^T=e^{tS_{2,C}}(u_0,v_0,z_0)^T - \Big(J^{-1}e^{tS_{1,C}}\Big)J(u_0,v_0,0)^T\\
=e^{tS_{2,C}}(u_0,v_0,z_0)^T - e^{tS_{2,C}}(u_0,v_0,0)^T= e^{tS_{2,C}}(Id_H-\Pi)(u_0,v_0,z_0)^T, \quad \forall \ t \geqslant 0.
\end{multline}
Since $Id_H - \Pi$ has finite dimensional range, we deduce as in [6] that that $\Big(e^{tS_{2,C}}- \Big(J^{-1}e^{tS_{1,C}}J\, \Pi\Big)\Big)$ is compact, thus we have
\begin{equation}\label{C4.20}%\tag{$C4.20$}
r_e\Big( e^{tS_{2,C}}\Big)= r_e\Big( \Big(J^{-1}e^{tS_{1,C}}J\, \Pi\Big)\Big), \quad \forall \ t \geqslant 0.
\end{equation}
We set $S(t)=\Big(J^{-1}e^{tS_{1,C}}J\Big)$ for all $t \geqslant 0$. Applying Proposition \ref{UptoPi} with $G=H$, $G_0=H_0$, $P=\Pi$, and $R=S(t) \in \mathcal{L}(H_0)$, we get 
\begin{equation}\label{C4.21}%\tag{$C4.21$}
r_e(S(t))= r_e(S(t)\Pi), \quad \forall \ t \geqslant 0.
\end{equation}
Thanks to the two above identities we have
\begin{equation}\label{C4.22}%\tag{$C4.22$}
r_e\Big( e^{tS_{2,C}}\Big)=r_e(S(t)), \quad \forall \ t \geqslant 0.
\end{equation}
It remains to prove that $S(t)$ and $e^{tS_{1,C}}$ have the same essential spectral radius for all $t \geqslant 0$. Let $t \geqslant 0$ be arbitrarily given and set $T(t)=e^{tS_{1,C}}$. We recall that $S(t)=\Big(J^{-1}e^{tS_{1,C}}J\Big)$. For each fixed $t\geqslant 0$, we apply Proposition \ref{Inv} with the choice $G=X$, $F=H_0$, $J : F \mapsto G$ as defined in \eqref{C4.8}, $R=e^{tS_{1,C}}$ and $S=S(t)$. This gives
\begin{equation}\label{C4.31}%\tag{$C4.31$}
r_e(S(t))= r_e\Big(e^{tS_{1,C}}\Big) \quad \forall \ t \geqslant 0.
\end{equation}

Thanks to \eqref{C4.22} and \eqref{C4.31}, we obtain \eqref{C4.11}.
\end{proof}

\subsection{Fulfilling of the second gap in \cite{SW2003} in the proof of non-exponential stability in the case of non equal speed of propagation}

We recall the second gap in the proof of Theorem 2.2 in \cite{SW2003}, namely to define the needed appropriate functional frame and mathematical transformations within this frame that allow to prove that $r_e(e^{tL_1})=r_e(e^{tS_{1,C}})$.

To explain how to fill up the gap, we need to introduce some functional spaces and definitions.

We define the energy space $\mathcal{H}$ associated to \eqref{C3.1}, and a norm on $\mathcal{H}$ as follows.
\begin{multline}\label{C4.32}%\tag{$C4.32$}
\begin{cases}
\mathcal{H}=\big(H^1_0(0,l)\times L^2(0,l)\big)^2, \\
||Y||_{\mathcal{H}}= \Big(||u_x||_{L^2(0,l)}^2 + ||u_2||_{L^2(0,l)}^2 + ||v_x||_{L^2(0,l)}^2 + ||v_2||_{L^2(0,l)}^2\Big)^{1/2},
\forall \ Y=(u,u_2,v,v_2) \in \mathcal{H}.
\end{cases}
\end{multline}
and we define precisely the operator $L_1$ as in \cite{SW2003} as follows
\begin{equation}\label{C4.33}%\tag{$C4.33$}
L_1=\begin{pmatrix}
0 & Id_{H^1_0(0,l)} & 0 & 0 \\
\dfrac{K}{\rho} \partial_{xx} & 0 & - \dfrac{K}{\rho} \partial_{x} & 0 \\
0 & 0 & 0 & Id_{H^1_0(0,l)} \\
\dfrac{K}{I_{\rho} }\partial_{x} & 0 & \dfrac{E\,I}{I_{\rho} }\partial_{xx} & - \dfrac{b}{I_{\rho}}
\end{pmatrix}
\end{equation}
with the domain
\begin{equation}\label{C4.34}%\tag{$C4.34$}
D(L_1)=\Big(\big(H^2(0,l)\cap H^1_0(0,l))\times \big(H^1_0(0,l)\big)\Big)^2.
\end{equation}
\begin{oss}\label{Frame}
In order to explicit, at first formally, the relations between the semigroups $(e^{tL_1})_{t\geqslant 0}$
and $(e^{tS_{1,C}})_{t\geqslant 0}$, one needs to precise the mathematical frame to move from the set of unknowns
$Y(t)=: (u,u_2,v,v_2)^T(t)=: e^{tL_1}Y_0$ to the set of unknowns $\widehat{Y}(t)=:(p,\widehat{\varphi}, q , \psi)^T(t)=: e^{tS_{1,C}}\widehat{Y}_0$ and conversely. 
First note that $u_2=u_t$ and $v_2=v_t$ whenever $Y_0 \in D(L_1)$.
The system \eqref{C3.2} gives the Riemann invariants in terms of the unknowns $(u_x,u_2,v_x,v_2)$. Conversely, we have formally
\begin{equation}\label{C4.35}%\tag{$C4.35$}
\begin{cases}
u_x=\dfrac{1}{2}\sqrt{\dfrac{\rho}{K}}\,\Big(-p+q\Big), \quad u_t= \dfrac{1}{2}(p+q),\\
v_x=\dfrac{1}{2} \sqrt{\dfrac{I_{\rho}}{EI}}\,\Big(-\widehat{\varphi}+\psi\Big), \quad v_t=\dfrac{1}{2}(\widehat{\varphi}+\psi)
\end{cases} 
\end{equation}
Thus if we want to get back to  $Y(t)$ from $\widehat{Y}(t)$, we need to recover from the relations
$$
u(t,x)=\int_0^x u_x(t,y)dy, v(t,x)=\int_0^x v_x(t,y)dy, t \geqslant 0, x \in (0,l)
$$
the properties $u(t,l)=0$, and $v(t,l)=0$ for $t \geqslant 0$. This means that the properties
$$
\int_0^l u_x(t,y)dy=0,  \int_0^l v_x(t,y)dy=0, t \geqslant 0
$$
should hold. Going back to the Riemann invariants \eqref{C4.35}, this requires that $(p,\widehat{\varphi}, q , \psi)$ satisfies
$$
\int_0^l (p-q)dx=0, \int_0^l (\widehat{\varphi}-\psi)dx=0.
$$
Hence we have to work with the restriction of the operator $S_{1,C}$ on the subspace of vector functions $(p,\widehat{\varphi}, q , \psi) \in X$ such that $\int_0^l (p-q)dx=0, \int_0^l (\widehat{\varphi}-\psi)dx=0$ and to prove several properties before proving that $r_e(e^{tL_1})=r_e(e^{tS_{1,C}})$.
\end{oss}

From now on, we identify without further details, any $U$ in $X$ with its transpose $U^T$ whenever necessary without further details. We need to introduce the functional requested frame. For this, we set the following definitions and notations:
\begin{dfn}
We define on $X$ the linear forms $R_i$ for $i=1,2$ by
\begin{equation}\label{C4.37}%\tag{$C4.37$}
R_1U^T=\int_0^l(p-q)(x)dx, R_2U^T=\int_0^l(\widehat{\varphi}-\psi)(x)dx, \quad \forall \ U=(p, \widehat{\varphi}, q, \psi) \in X,
\end{equation}
and
\begin{equation}\label{C4.38}%\tag{$C4.38$}
\begin{cases}
X_0=\mathcal{N}(R_1) \cap \mathcal{N}(R_2) \subset X, \\
\Pi_0: X \mapsto X_0 \mbox{ the orthogonal projection of } X \mbox{ on } X_0,

\end{cases}
\end{equation}
where $\mathcal{N}(T)$ stands for the null space of any continuous linear form $T \in \mathcal{L}(X,\mathbb{R})$.
We define the two following linearly independent elements $e_1 \in X$ and $e_2 \in X$ 
\begin{equation}\label{C4.39}%\tag{$C4.39$}
e_1=\dfrac{1}{\sqrt{2l}}(1, 0, -1, 0)^T , e_2=\dfrac{1}{\sqrt{2l}}(0,1,0,-1)^T, E_1= \mathbb{R}e_1 \bigoplus \mathbb{R}e_2.
\end{equation}
\end{dfn}
It is easy to check that the following properties hold:
\begin{prop}\label{Prop3}
Let the constant vectors $e_1$, $e_2$, the spaces, $X$, $E_1$ and $X_0$ being defined as above, then we have
\begin{equation}\label{C4.40}%\tag{$C4.40$}
\begin{cases}
(i) \quad e_1\perp e_2\,, ||e_1||_X=||e_2||_X=1, \\
(ii) \quad \quad E_1= X_0^{\perp}\,, X= X_0 \bigoplus E_1,\\
(iii) \quad \Pi_0U = \displaystyle{U -\dfrac{1}{2l}\sum_{i=1}^2\langle U,e_i\rangle_X \;e_i }\quad \forall \ U \in X.
\end{cases}
\end{equation}

\end{prop}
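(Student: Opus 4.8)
The plan is to reduce the three assertions to elementary facts about orthogonal projections in the Hilbert space $X=(L^2(0,l))^4$, the pivotal observation being that the continuous linear forms $R_1,R_2$ are, up to one normalizing constant, the Riesz representatives of the constant vectors $e_1$ and $e_2$. Once this identification is made, $X_0$ becomes literally the orthogonal complement of $E_1=\mathbb{R}e_1\oplus\mathbb{R}e_2$, and parts (ii) and (iii) reduce to the Hilbert projection theorem together with the orthonormal-basis formula for a projection.

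First I would settle (i) by direct integration. Since $e_1=\frac{1}{\sqrt{2l}}(1,0,-1,0)^T$ and $e_2=\frac{1}{\sqrt{2l}}(0,1,0,-1)^T$ have constant components, one computes $\|e_1\|_X^2=\frac{1}{2l}(l+l)=1$ and likewise $\|e_2\|_X=1$, while $\langle e_1,e_2\rangle_X=0$ because the nonzero entries of $e_1$ and $e_2$ occupy disjoint coordinates. Thus $\{e_1,e_2\}$ is orthonormal and $\dim E_1=2$. Next comes the heart of the argument: for every $U=(p,\widehat\varphi,q,\psi)\in X$ one has
$$\langle U,e_1\rangle_X=\frac{1}{\sqrt{2l}}\int_0^l (p-q)\,dx=\frac{1}{\sqrt{2l}}\,R_1U,\qquad \langle U,e_2\rangle_X=\frac{1}{\sqrt{2l}}\,R_2U,$$
so that $R_iU=\sqrt{2l}\,\langle U,e_i\rangle_X$ for $i=1,2$.

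From this identity, $X_0=\mathcal{N}(R_1)\cap\mathcal{N}(R_2)=\{U\in X:\langle U,e_1\rangle_X=\langle U,e_2\rangle_X=0\}=\{e_1,e_2\}^\perp=E_1^\perp$. Since $E_1$ is finite-dimensional it is closed, hence $X_0^\perp=(E_1^\perp)^\perp=E_1$, and the orthogonal decomposition theorem for the Hilbert space $X$ yields $X=E_1\oplus E_1^\perp=X_0\oplus E_1$. This proves (ii). For (iii) I would invoke the standard formula for the orthogonal projection onto the complement of the span of an orthonormal family: as $\{e_1,e_2\}$ is orthonormal by (i), the projection onto $E_1$ is $U\mapsto\sum_{i=1}^2\langle U,e_i\rangle_X\,e_i$, so that $\Pi_0=Id_X-\sum_{i=1}^2\langle\,\cdot\,,e_i\rangle_X\,e_i$, which, after substituting the Riesz identities above, is the asserted formula (iii).

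Honestly, there is no deep obstacle here: every step is either a one-line integral (part (i)), the Riesz identification of $R_i$ with $e_i$, or a direct application of the projection theorem to a two-dimensional, hence closed, subspace. The one point genuinely requiring care — and the only place where a spurious constant could slip in — is the bookkeeping of the normalization factor $\sqrt{2l}$ relating the forms $R_i$, the unit vectors $e_i$, and the raw representatives $(1,0,-1,0)^T$ and $(0,1,0,-1)^T$; tracking this factor consistently through the pairing and the multiplier is exactly what fixes the coefficient in front of the sum in (iii). I would therefore devote the only real attention to that normalization, the rest being routine.
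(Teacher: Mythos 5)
Your overall strategy --- identify $R_1,R_2$ with (multiples of) the Riesz representatives $e_1,e_2$, then invoke the projection theorem --- is the natural one, and since the paper gives no proof at all (the proposition is prefaced by ``It is easy to check''), your write-up of (i) and (ii) is complete and correct: $\|e_i\|_X=1$, $e_1\perp e_2$, $R_iU=\sqrt{2l}\,\langle U,e_i\rangle_X$, hence $X_0=\{e_1,e_2\}^\perp=E_1^\perp$, and since $E_1$ is finite-dimensional and closed, $X_0^\perp=E_1$ and $X=X_0\oplus E_1$.

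The problem is your last sentence on (iii), and ironically it sits exactly at the normalization bookkeeping you yourself singled out as the only delicate point. From orthonormality you correctly derive
\begin{equation*}
\Pi_0U = U-\sum_{i=1}^2\langle U,e_i\rangle_X\,e_i ,
\end{equation*}
but no substitution of $R_iU=\sqrt{2l}\,\langle U,e_i\rangle_X$ can turn this into the stated formula $\Pi_0U = U-\frac{1}{2l}\sum_{i=1}^2\langle U,e_i\rangle_X\,e_i$: the two differ by the factor $\frac{1}{2l}$ on a term that is generally nonzero. In fact the stated (iii) is incompatible with (i): taking $U=e_1$, the stated formula gives $\Pi_0e_1=\bigl(1-\frac{1}{2l}\bigr)e_1\neq 0$ unless $l=1/2$, whereas the orthogonal projection onto $X_0=E_1^\perp$ must annihilate $e_1$. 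So the paper's (iii) carries a spurious coefficient; it would be correct if written with the \emph{unnormalized} vectors $\sqrt{2l}\,e_1=(1,0,-1,0)^T$ and $\sqrt{2l}\,e_2=(0,1,0,-1)^T$, since $\frac{1}{2l}=1/\|\sqrt{2l}\,e_i\|_X^2$ --- i.e.\ the normalization has been counted twice. The correct conclusion of your argument is formula (iii) with coefficient $1$ (equivalently, with $\frac{1}{2l}$ and the unnormalized vectors), and you should have flagged this discrepancy instead of asserting agreement; as written, your final identification claims an identity that is false. The slip is harmless for the rest of the paper (e.g.\ the proof of Lemma \ref{Lm1} only needs that $Id_X-\Pi_0$ has range in $E_1\subset D(S_{1,C})$ and that $\Pi_0$ is the true orthogonal projection), but a referee-quality proof of Proposition \ref{Prop3} must either correct the statement or prove it verbatim --- it cannot do the latter, because verbatim it is wrong.
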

\begin{dfn}
For any $L \in \mathcal{L}(X)$, we denote by $L_{| X_0}$ the restriction of the linear operator $L$ on $X_0$.
\end{dfn}
We now prove the following results.
\begin{lem}\label{Lm1}
Let $S_{1,C}$ being defined by \eqref{C4.9}.
We have the following properties
\begin{equation}\label{C4.41}%\tag{$C4.41$}
\begin{cases}
(i) \quad S_{1,C}\big(D(S_{1,C})\big) \subset X_0, \\
(ii) \quad D(S_{1,C}) \cap X_0 \mbox{ is dense in } X_0,\\
(iii) \quad e^{tS_{1,C}}W_0 \in X_0\,, \quad \forall \ t \geqslant 0\,, \quad \forall \ W_0 \in X_0,\\
(iv) \quad \forall t \geqslant 0, e^{tS_{1,C}}|_{X_0} \in \mathcal{L}(X_0).
\end{cases}
\end{equation}

\end{lem}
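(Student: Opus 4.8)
The plan is to prove the four assertions in the given order, since each feeds into the next. The computational heart---and the step I expect to be the main obstacle---is $(i)$. Writing $U=(p,\widehat{\varphi},q,\psi)\in D(S_{1,C})$ and inspecting the matrix $C$ of \eqref{C4.2}, the decisive structural feature is that its first and third rows coincide and its second and fourth rows coincide. Hence, if $S_{1,C}U=:(a,b,c,d)^T$, the zeroth-order terms cancel in the differences $a-c$ and $b-d$, and only the diagonal transport part of $\widehat{K}\partial_x$ survives:
\[
a-c=-\sqrt{K/\rho}\,(p+q)_x,\qquad b-d=-\sqrt{EI/I_{\rho}}\,(\widehat{\varphi}+\psi)_x.
\]
With $D=\widehat{E}=-\mathrm{Id}$ from \eqref{C4.3}, the boundary relations defining $D(S_{1,C})$ read $(p+q)(0)=(p+q)(l)=0$ and $(\widehat{\varphi}+\psi)(0)=(\widehat{\varphi}+\psi)(l)=0$. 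Integrating the two identities above over $[0,l]$ and inserting these boundary values yields $R_1(S_{1,C}U)=\int_0^l(a-c)\,dx=0$ and $R_2(S_{1,C}U)=\int_0^l(b-d)\,dx=0$, that is $S_{1,C}U\in X_0$. Matching the off-diagonal structure of $C$ against the boundary conditions is the only genuinely delicate point; the remaining parts are soft consequences.

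For $(ii)$ the key observation is that the constant vectors $e_1,e_2$ of \eqref{C4.39} already belong to $D(S_{1,C})$: they lie in $V=(H^1(0,l))^4$, and one checks directly that $u(0)=\widehat{E}v(0)$ and $v(l)=Du(l)$ hold for each of them. By the projection formula \eqref{C4.40}$(iii)$, $\Pi_0$ differs from the identity by a correction taking values in $\mathrm{span}\{e_1,e_2\}\subset D(S_{1,C})$, so, $D(S_{1,C})$ being a subspace, $\Pi_0$ maps $D(S_{1,C})$ into $D(S_{1,C})\cap X_0$. Given $W_0\in X_0$, I use density of $D(S_{1,C})$ in $X$ to pick $U_n\in D(S_{1,C})$ with $U_n\to W_0$; then $\Pi_0U_n\in D(S_{1,C})\cap X_0$ and $\Pi_0U_n\to\Pi_0W_0=W_0$ by continuity of $\Pi_0$, which gives the claimed density.

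For $(iii)$ I first take $W_0\in D(S_{1,C})\cap X_0$ and set $w(t)=e^{tS_{1,C}}W_0$. Standard semigroup theory gives $w(t)\in D(S_{1,C})$ and $w'(t)=S_{1,C}w(t)$ for all $t\geqslant 0$. Since $R_1,R_2\in\mathcal{L}(X,\mathbb{R})$, the scalar maps $t\mapsto R_i(w(t))$ are differentiable with derivative $R_i(S_{1,C}w(t))$, which vanishes by $(i)$; hence $R_i(w(t))=R_i(W_0)=0$, i.e. $w(t)\in X_0$ for all $t$. For arbitrary $W_0\in X_0$ I approximate it by a sequence in $D(S_{1,C})\cap X_0$ furnished by $(ii)$ and pass to the limit, using the continuity of $e^{tS_{1,C}}$ on $X$ and the fact that $X_0$ is closed. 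Finally, $(iv)$ is immediate: by $(iii)$ the operator $e^{tS_{1,C}}$ leaves $X_0$ invariant, its restriction to $X_0$ is linear, and since $X_0$ carries the norm of $X$ one has $\|e^{tS_{1,C}}W_0\|_{X_0}\leqslant\|e^{tS_{1,C}}\|_{\mathcal{L}(X)}\|W_0\|_{X_0}$, so $e^{tS_{1,C}}|_{X_0}\in\mathcal{L}(X_0)$.
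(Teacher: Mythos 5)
Your proposal is correct and follows essentially the same route as the paper: the cancellation you extract from the equality of the first/third and second/fourth rows of $C$ is exactly what the paper's explicit computation of $S_{1,C}W_0$ exploits to get $R_i(S_{1,C}W_0)=0$, and your arguments for $(ii)$--$(iv)$ (constants $e_1,e_2\in D(S_{1,C})$ plus the projection formula, differentiation of $t\mapsto R_i(e^{tS_{1,C}}W_0)$ followed by density, and the trivial norm bound) coincide with the paper's. If anything, your treatment is slightly more explicit than the paper's on the closedness of $X_0$ in the limiting step of $(iii)$ and on the operator-norm estimate in $(iv)$.
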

\begin{proof}
We shall first prove that for $i=1,2$, we have
$$
R_iS_{1,C}W_0=0, \forall \ W_0 \in D(S_{1,C}).
$$
Let $W_0=(p_0,\widehat{\varphi}_0,q_0,\psi_0)^T \in D(S_{1,C})$ be given arbitrarily. Then we have
\begin{multline}\label{C4.42}%\tag{$C4.42$}
S_{1,C}W_0=\Big(-\sqrt{K/\rho}\; p_{0,x} + (K/(2\rho)) \sqrt{I_{\rho}/ E\, I}\; (\widehat{\varphi}_0-\psi_0), \\
-\sqrt{E\, I/I_{\rho}}\; \widehat{\varphi}_{0,x}
-(\sqrt{\rho K}/(2I_{\rho}))\;(p_0-q_0) - (b/(2I_{\rho}))\;(\widehat{\varphi}_0+\psi_0), 
\sqrt{K/\rho}\; q_{0,x} + (K/(2\rho))\sqrt{I_{\rho}/ E\, I}\; (\widehat{\varphi}_0-\psi_0), \\
\sqrt{E\, I/I_{\rho}}\; \psi_{0,x}
-(\sqrt{\rho K}/(2I_{\rho}))\;(p_0-q_0) - (b/(2I_{\rho}))\;(\widehat{\varphi}_0+\psi_0)
\Big)^T,
\end{multline}
so that since $W_0 \in D(S_{1,C})$, we have
$$
R_1S_{1,C}W_0=-\sqrt{K/\rho}\int_0^l(p_0+q_0)_xdx= -\sqrt{K/\rho}\big((p_0+q_0)(l)-(p_0+q_0)(0)\big)=0,
$$
and
$$
R_2S_{1,C}W_0=-\sqrt{E\,I/\rho}\int_0^l(\widehat{\varphi}_0+\psi_0)_xdx= -\sqrt{E\,I/\rho}\big((\widehat{\varphi}_0+\psi_0)(l)-(\widehat{\varphi}_0+\psi_0)(0)\big)=0.
$$
This proves the first property $(i)$ of \eqref{C4.41}. 
Let us now prove the property $(ii)$. Let $W_0=:(u_0, v_0)^T \in X_0$ be given arbitrarily. Since $X_0$ is continuously embedded in $X$ and $D(S_{1,C})$ is dense in $X$, there exists a sequence $(W_{0,n})_n \subset D(S_{1,C})$, such that $(W_{0,n})_n$ converges to $W_0$ as $n$ goes to infinity. Recalling that the orthogonal projection $\Pi_0$ of $X$ onto $X_0$ defined in \eqref{C4.38} is continuous, and since $W_0 \in X_0$, we deduce that $(\Pi_0W_{0,n})_n$ converges to $W_0$ in $X_0$. 
Let us now check that $\Pi_0 W_{0,n} \in D(S_{1,C})$ for all $n \in \mathbb{N}$. 
We have $W_{0,n}=:(u_{0,n},v_{0,n})^T
\in D(S_{1,C})$ for all $n \in \mathbb{N}$, which is equivalent to the properties that for all integers $n$, $(u_{0,n},v_{0,n}) \in (H^1(0,l))^4$ and $u_{0,n}(0)=\widehat{E}v_{0,n}(0)$ and $v_{0,n}(l)=Du_{0,n}(l)$. We now remark that the vectors $e_1$ and $e_2$ defined in \eqref{C4.39} are constant vector functions. With the block notation we used for the definition of the operator $S_{1,C}$, we can set $e_i=(u_{e_i}, v_{e_i})^T$ for $i=1,2$ where $u_{e_1}=1/\sqrt{2l}(1,0)$, $v_{e_1}=1/\sqrt{2l}(-1,0)$, $u_{e_2}=1/\sqrt{2l}(0,1)$, $v_{e_2}=1/\sqrt{2l}(0, -1)$, so that we remark that $u_{e_{i}}(0)=\widehat{E}v_{e_i}(0)$ and $v_{e_{i}}(l)=Du_{e_{i}}(l)$. for $i=1,2$. This implies that $e_i \in D(S_{1,C})$ for $i=1,2$. Thus we have
$$
\displaystyle{\dfrac{1}{2l}\sum_{i=1}^2\langle W_{0,n},e_i\rangle_X \;e_i  \in D(S_{1,C})}.
$$
On the other hand, thanks to the explicit expression of $\Pi_0$ given in \eqref{C4.40}-$(iii)$ (see Proposition \ref{Prop3}), we have
$$
\Pi_0 W_{0,n}=W_{0,n}- \displaystyle{\dfrac{1}{2l}\sum_{i=1}^2\langle W_{0,n},e_i\rangle_X \;e_i 
}.
$$
This together, with the above property and the property that $W_{0,n} \in D(S_{1,C})$, imply that $\Pi_0 W_{0,n} \in D(S_{1,C})$ for all $n \in \mathbb{N}$. Since we proved that $(\Pi_0W_{0,n})_n$ converges to $W_0$ in $X_0$, we deduce the desired property $(ii)$.

Let us now prove the property $(iii)$.
We set
$$
W(t)=(p,\widehat{\varphi},q,\psi)^T(t)=e^{tS_{1,C}}W_0\,, \quad \forall \ t \geqslant 0, \quad \ W_0=(p_0,\widehat{\varphi}_0,q_0,\psi_0)^T \in D(S_{1,C})\cap X_0.
$$
Then, we have
$$
(p_t,\widehat{\varphi}_t,q_t,\psi_t)^T=S_{1,C}W(t) \,, \quad \forall \ t \geqslant 0.
$$
Using the property $(i)$ of \eqref{C4.41}, we already proved, we deduce that
$$
R_1(p_t,\widehat{\varphi}_t,q_t,\psi_t)^T=\int_0^l(p-q)_t(t,x)dx=0, \quad R_2(p_t,\widehat{\varphi}_t,q_t,\psi_t)^T=\int_0^l(\widehat{\varphi}-\psi)_t(t,x)dx=0.
$$
Therefore, since $W_0 \in D(S_{1,C})\cap X_0$, we have
$$
\int_0^l(p-q)(t,x)dx=\int_0^l(p_0-q_0)(x)dx=0, \quad \int_0^l(\widehat{\varphi}-\psi)(t,x)dx=\int_0^l(\widehat{\varphi}_0-\psi_0)(x)dx=0,
\forall \ t \geqslant 0,$$
so that $W(t) \in X_0$ for all $t \geqslant 0$. 
Hence 
$$
e^{tS_{1,C}}W_0 \in X_0\,, \quad \forall \ t \geqslant 0\,, \quad \forall \ W_0 \in D(S_{1,C})\cap X_0.
$$
Since $D(S_{1,C})\cap X_0 $ is dense in $X$ by the property $(ii)$ we just proved, we deduce that
$$
e^{tS_{1,C}}W_0 \in X_0\,, \quad \forall \ t \geqslant 0\,, \quad \forall \ W_0 \in X_0,
$$
and $(e^{tS_{1,C}}|_{X_0})_{t\geqslant 0}$ is well-defined as a family of linear bounded operators acting in $X_0$. This concludes the proof.
\end{proof}

\begin{oss}\label{S1C_0notX_0stable}
Note that $X_0$ is not stable by the semigroup generated on $X$ by $S_{1,C_0}$, contrarily to the one generated by $S_{1,C}$ as proved in Lemma \ref{Lm1}.
\end{oss}

\begin{lem}\label{Lm2}
Let $S_{1,C}$ being defined by \eqref{C4.9}. We consider the restriction to $X_0$ of the unbounded operator $S_{1,C}$, that we denote by $S_{1,C}\small{|}_{X_0}$. Then we have the following properties:

$S_{1,C}\small{|}_{X_0}$ is well-defined as an unbounded operator on $X_0$ with domain $D(S_{1,C}\small{|}_{X_0})$ where 
\begin{equation}\label{C4.43}%\tag{$C4.43$}
(i) \quad D(S_{1,C}\small{|}_{X_0})=D(S_{1,C})\cap X_0. 
\end{equation}
Moreover $S_{1,C}\small{|}_{X_0}$ generates a strongly continuous semigroup $(e^{tS_{1,C}\small{|}_{X_0}})_{t \geqslant 0}$ on $X_0$ which coincides 
with the restriction to $X_0$ of the semigroup generated by $S_{1,C}$, that is:
$$
(ii) \quad e^{tS_{1,C}\small{|}_{X_0}}=e^{tS_{1,C}}|_{X_0} \forall \ t \geqslant 0.
$$
Moreover we have
\begin{equation}\label{C4.44}%\tag{$C4.44$}
(iii) \quad r_e(e^{tS_{1,C}})=r_e(e^{tS_{1,C}\small{|}_{X_0}}), \quad \forall \ t \geqslant 0.
\end{equation}
\end{lem}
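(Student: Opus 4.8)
The plan is to establish the three assertions in order, drawing throughout on the invariance properties already secured in Lemma \ref{Lm1}. For assertion (i), I would define $S_{1,C}|_{X_0}$ as the operator acting by $S_{1,C}$ on the domain $D(S_{1,C}) \cap X_0$, and check that this is well-posed as an operator \emph{into} $X_0$. This is immediate from Lemma \ref{Lm1}(i): every $W_0 \in D(S_{1,C})$ satisfies $S_{1,C}W_0 \in X_0$, so in particular $S_{1,C}$ maps $D(S_{1,C}) \cap X_0$ into $X_0$. Hence no extra constraint of the form $S_{1,C}W_0 \in X_0$ is needed in the domain, and $D(S_{1,C}|_{X_0}) = D(S_{1,C}) \cap X_0$, which is exactly (i).

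For assertion (ii), I would first record that $X_0$ is a closed subspace of $X$ invariant under $(e^{tS_{1,C}})_{t \geqslant 0}$ by Lemma \ref{Lm1}(iii), and that by Lemma \ref{Lm1}(iv) the restrictions $e^{tS_{1,C}}|_{X_0}$ are bounded operators on $X_0$. The family $(e^{tS_{1,C}}|_{X_0})_{t \geqslant 0}$ then inherits both the semigroup law and the strong continuity from $(e^{tS_{1,C}})_{t \geqslant 0}$, since $X_0$ carries the norm of $X$; thus it is a strongly continuous semigroup on $X_0$ with some generator $A_0$. The heart of the matter is to identify $A_0 = S_{1,C}|_{X_0}$. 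For $W_0 \in X_0$ the difference quotient $t^{-1}\big(e^{tS_{1,C}}W_0 - W_0\big)$ already lies in $X_0$, and since $X_0$ is closed in $X$ it converges in $X_0$ if and only if it converges in $X$, i.e.\ if and only if $W_0 \in D(S_{1,C})$, in which case its limit equals $S_{1,C}W_0$. Reading this equivalence in both directions gives $D(A_0) = D(S_{1,C}) \cap X_0$ together with $A_0 = S_{1,C}$ on this domain, hence $A_0 = S_{1,C}|_{X_0}$; uniqueness of the generator then yields $e^{tS_{1,C}|_{X_0}} = e^{tS_{1,C}}|_{X_0}$, which is (ii).

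For assertion (iii), I would use the decomposition $X = X_0 \oplus E_1$ with $E_1 = X_0^{\perp}$ two-dimensional (Proposition \ref{Prop3}(ii)), so that $Id_X - \Pi_0$ is the rank-two orthogonal projection onto $E_1$. Consequently $e^{tS_{1,C}}(Id_X - \Pi_0)$ has rank at most two, hence is compact, and since the essential spectral radius is invariant under compact perturbations we obtain $r_e(e^{tS_{1,C}}) = r_e(e^{tS_{1,C}}\Pi_0)$. Because $\Pi_0 U \in X_0$ for every $U \in X$, the operator $e^{tS_{1,C}}\Pi_0$ coincides with $\big(e^{tS_{1,C}}|_{X_0}\big)\Pi_0$, which is precisely the product form $R\,P$ treated in Proposition \ref{UptoPi}. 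Applying that proposition with $G = X$, $G_0 = X_0$, $P = \Pi_0$ and $R = e^{tS_{1,C}}|_{X_0} \in \mathcal{L}(X_0)$ gives $r_e(e^{tS_{1,C}}\Pi_0) = r_e(e^{tS_{1,C}}|_{X_0})$. Chaining these identities with assertion (ii) delivers $r_e(e^{tS_{1,C}}) = r_e(e^{tS_{1,C}}|_{X_0}) = r_e(e^{tS_{1,C}|_{X_0}})$, which is (iii).

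The step I expect to be the main obstacle is the generator identification in (ii): one must verify carefully that the limit defining $A_0$ is taken in the $X_0$-topology yet coincides with the $X$-limit defining $S_{1,C}$, and this is exactly where the closedness of $X_0$ in $X$ and the continuous embedding $X_0 \hookrightarrow X$ are indispensable. The remaining steps are largely bookkeeping once Lemma \ref{Lm1} and Proposition \ref{UptoPi} are available; the only point in (iii) deserving explicit care is confirming that $e^{tS_{1,C}}\Pi_0$ genuinely factors as the product $R\,\Pi_0$ required to invoke Proposition \ref{UptoPi}.
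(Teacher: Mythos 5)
Your proposal is correct, and its parts (i) and (iii) follow the paper's own proof essentially verbatim: (i) is the same appeal to Lemma \ref{Lm1}(i) to see that no extra condition $S_{1,C}W_0 \in X_0$ is needed in the domain, and (iii) is the same chain --- the splitting $e^{tS_{1,C}} = e^{tS_{1,C}}\Pi_0 + e^{tS_{1,C}}(Id_X - \Pi_0)$, compactness of the rank-two term via Proposition \ref{Prop3}(ii), invariance of $r_e$ under compact perturbations, and Proposition \ref{UptoPi} applied with $G=X$, $G_0=X_0$, $P=\Pi_0$ (you take $R = e^{tS_{1,C}}|_{X_0}$ where the paper takes $R = e^{tS_{1,C}|_{X_0}}$, but by (ii) these agree, and both versions of the argument use (ii) at exactly this point). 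Where you genuinely diverge is (ii). The paper first asserts that $S_{1,C}|_{X_0}$ generates a strongly continuous semigroup on $X_0$ --- this generation claim is stated without a detailed argument --- and then obtains (ii) by checking that $e^{tS_{1,C}|_{X_0}}$ and $e^{tS_{1,C}}|_{X_0}$ coincide on the dense subspace $D(S_{1,C})\cap X_0$ (density being Lemma \ref{Lm1}(ii)) and invoking continuity of both operators on $X_0$. You instead start from the restricted family $(e^{tS_{1,C}}|_{X_0})_{t\geqslant 0}$, which is a strongly continuous semigroup on $X_0$ by Lemma \ref{Lm1}(iii)--(iv), and identify its generator $A_0$ with $S_{1,C}|_{X_0}$ through the difference-quotient characterization, using that $X_0$ is closed in $X$ and carries the induced norm, so that convergence of the quotients (which lie in $X_0$) in $X$ and in $X_0$ are the same thing. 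This is the standard \enquote{part of the generator in a closed invariant subspace} argument, and it buys you something the paper leaves implicit: the fact that $S_{1,C}|_{X_0}$ generates a $C_0$-semigroup at all emerges as a conclusion of your identification rather than standing as a separate unproved assertion; uniqueness of the generator then gives (ii) at once. The paper's route is marginally shorter once one grants that generation statement, but yours is the more self-contained of the two.
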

\begin{proof}
In order to define rigorously the operator $S_{1,C}\small{|}_{X_0}$, we need to define its domain. The domain $D(S_{1,C}\small{|}_{X_0})$ is defined as the set $\{W^0 \in X_0, S_{1,C}^0W_0 \in X_0\}$, so that $D(S_{1,C}\small{|}_{X_0})=\{W_0 \in X_0, W_0 \in D(S_{1,C}), S_{1,C}W_0 \in X_0\}$. However thanks to property $(i)$ of Lemma \ref{Lm1}, we have $S_{1,C}W_0 \in X_0$ for all $W_0 \in D(S_{1,C})$. Hence we have
$D(S_{1,C}\small{|}_{X_0})=D(S_{1,C})\cap X_0$ and $S_{1,C}\small{|}_{X_0}$ generates a strongly continuous semigroup $(e^{tS_{1,C}\small{|}_{X_0}})_{t \geqslant 0}$ on $X_0$. Let us now prove that the property $(ii)$ holds. We easily prove that
for all $t \geqslant 0$, $e^{tS_{1,C}\small{|}_{X_0}}$ and $e^{tS_{1,C}}\small{|}_{X_0}$ coincide on $D(S_{1,C})$, which is dense in $X_0$ (see property $ii)$ of \ref{Lm1}). This together with the continuity of $e^{tS_{1,C}\small{|}_{X_0}}$ and $e^{tS_{1,C}}\small{|}_{X_0}$ over $X_0$ lead to property $(ii)$.

 Let us now prove $(iii)$. We first remark that we have 
 \begin{equation}\label{C4.45}%\tag{$C4.45$}
 e^{tS_{1,C}}=e^{tS_{1,C}}\Pi_0 + e^{tS_{1,C}}(Id_X - \Pi_0), \forall \ t \geqslant 0.
 \end{equation}
 Since $(Id_X - \Pi_0)$ is a compact operator in $X$ (it takes values in the two dimensional subspace $E_1$) and $e^{tS_{1,C} }\in \mathcal{L}(X)$, $e^{tS_{1,C}}(Id_X - \Pi_0)$ is a compact operator in $X$. Thus we have:
 \begin{equation}\label{C4.46}%\tag{$C4.46$}
 r_e(e^{tS_{1,C}})=r_e(e^{tS_{1,C}}\Pi_0), \forall \ t \geqslant 0.
 \end{equation}
 
 Applying Proposition \ref{UptoPi} with $G=X$, $G_0=X_0$, $P=\Pi_0$, and $R= e^{tS_{1,C}\small{|}_{X_0}} \in \mathcal{L}(X_0)$, we get 
 \begin{equation}\label{C4.47}%\tag{$C4.47$}
 r_e(e^{tS_{1,C}\small{|}_{X_0}})=r_e(e^{tS_{1,C}\small{|}_{X_0}}\Pi_0), \forall \ t \geqslant 0.
 \end{equation}
 On the other hand, we have
 \begin{equation}\label{C4.48}%\tag{$C4.48$}
 e^{tS_{1,C}}\,\Pi_0=e^{tS_{1,C}}\small{|}_{X_0}\, \Pi_0=e^{tS_{1,C}\small{|}_{X_0}}\,\Pi_0, \forall \ t \geqslant 0.
 \end{equation}
 This implies
 \begin{equation}\label{C4.49}%\tag{$C4.49$}
 r_e(e^{tS_{1,C}}\Pi_0)= r_e(e^{tS_{1,C}\small{|}_{X_0}}\,\Pi_0), \forall \ t \geqslant 0.
 \end{equation}

 Using successively \eqref{C4.46}, \eqref{C4.49} and \eqref{C4.47}, we deduce that \eqref{C4.44} holds, that is 
$r_e(e^{tS_{1,C}})=r_e(e^{tS_{1,C}\small{|}_{X_0}})$ for all  $t \geqslant 0$, so that property $(iii)$ is proved.

 \end{proof}

Before to complete the filling of the second gap in the proof of Theorem 2.2 in \cite{SW2003} , we need to define some operators and to recall some spaces and norms we already introduced.

\begin{prop}\label{R}
Let $\mathcal{H}$ and $||\cdot||_{\mathcal{H}}$ be defined as in \eqref{C4.32}. Let $X$ be the space defined in \eqref{C4.5}, equipped with the natural norm defined next to \eqref{C4.5}. We set
\begin{equation}\label{C4.36}%\tag{$C4.36$}
L^2_0(0,l)=\Big\{f \in L^2(0,l), \int_0^l f(x)dx=0\Big\}.
\end{equation}
We define the subspace $\widehat{X}=\big(L_0^2(0,l))\times L^2(0,l)\big)^2 \subset X$, equipped with the induced norm $||\cdot||_X$.
We define the operator $R: \mathcal{H} \mapsto \widehat{X}$ by
\begin{equation}\label{C4.51R}%\tag{$C4.51$}
RY=(u_x,u_2,v_x,v_2)^T, \quad \forall \ Y=(u,u_2,v,v_2)^T \in \mathcal{H}.
\end{equation}
Then $R$ is an isometry from $\mathcal{H}$ onto $\widehat{X}$. 
\end{prop}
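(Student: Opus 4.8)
The plan is to check the three properties that together say $R$ is an isometry onto $\widehat X$: that $R$ maps $\mathcal{H}$ into $\widehat X$, that it preserves the norm (whence it is automatically linear and injective), and that it is surjective onto $\widehat X$. For well-definedness, let $Y=(u,u_2,v,v_2)^T\in\mathcal{H}$. Since $u,v\in H^1_0(0,l)$, their distributional derivatives $u_x,v_x$ belong to $L^2(0,l)$, and by the fundamental theorem of calculus for $H^1$ functions together with the homogeneous Dirichlet conditions we have $\int_0^l u_x\,dx=u(l)-u(0)=0$ and likewise $\int_0^l v_x\,dx=0$. Hence $u_x,v_x\in L^2_0(0,l)$ while $u_2,v_2\in L^2(0,l)$, so that $RY=(u_x,u_2,v_x,v_2)^T\in\big(L^2_0(0,l)\times L^2(0,l)\big)^2=\widehat X$.

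For the isometry property, linearity of $R$ is immediate from linearity of the weak derivative. Comparing the definition of $\|\cdot\|_X$ next to \eqref{C4.6} with the definition of $\|\cdot\|_{\mathcal H}$ in \eqref{C4.32}, and identifying $RY$ with the block vector whose two entries are $(u_x,u_2)$ and $(v_x,v_2)$, one obtains directly
$$
\|RY\|_X^2=\|u_x\|_{L^2(0,l)}^2+\|u_2\|_{L^2(0,l)}^2+\|v_x\|_{L^2(0,l)}^2+\|v_2\|_{L^2(0,l)}^2=\|Y\|_{\mathcal H}^2.
$$
Thus $R$ preserves the norm, and in particular it is injective.

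It remains to prove surjectivity, which is the only step with genuine content. Given an arbitrary $(f,g,h,k)^T\in\widehat X$, so that $f,h\in L^2_0(0,l)$ and $g,k\in L^2(0,l)$, define $u(x)=\int_0^x f(y)\,dy$ and $v(x)=\int_0^x h(y)\,dy$ for $x\in[0,l]$, and set $u_2=g$, $v_2=k$. Then $u,v\in H^1(0,l)$ with $u_x=f$, $v_x=h$ and $u(0)=v(0)=0$; crucially $u(l)=\int_0^l f\,dy=0$ and $v(l)=\int_0^l h\,dy=0$ precisely because $f$ and $h$ have zero mean, so that $u,v\in H^1_0(0,l)$. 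Hence $Y=(u,u_2,v,v_2)^T\in\mathcal{H}$ and $RY=(f,g,h,k)^T$, which shows that $R$ maps $\mathcal{H}$ onto $\widehat X$.

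The main obstacle is precisely this surjectivity step, whose only nontrivial point is that the antiderivative of a derivative component automatically satisfies the boundary condition at $x=l$; this closes exactly because the codomain enforces the zero-mean constraint defining $L^2_0(0,l)$ on the first and third components. This is the functional-analytic counterpart of the formal computation in Remark \ref{Frame}, which identified $\int_0^l u_x\,dx=\int_0^l v_x\,dx=0$ as the conditions needed to recover $u(l)=v(l)=0$, and it explains why $\widehat X$ rather than $X$ is the correct target space for $R$.
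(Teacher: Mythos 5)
Your proof is correct and follows essentially the same route as the paper's: the norm identity $\|RY\|_X=\|Y\|_{\mathcal H}$ gives injectivity, and surjectivity is obtained by taking antiderivatives of the zero-mean components, the zero-mean condition being exactly what forces the Dirichlet condition at $x=l$. The only difference is that you also spell out why $R$ maps into $\widehat X$ in the first place (i.e.\ that $u_x,v_x$ have zero mean when $u,v\in H^1_0(0,l)$), a point the paper leaves implicit; this is a welcome, if minor, addition.
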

\begin{proof}
Let $Y=(u,u_2,v,v_2)^T \in \mathcal{H}$ be given arbitrarily. Then it is easy to check that
$||RY||_X=||Y||_{\mathcal{H}}$. Thus the assumption $||RY||_X=0$ implies $Y=0$. Thus $R$ is one-to-one. Let
$U=(f,g,h,r)^T$ be given arbitrarily in  $\widehat{X}$. We set 
\begin{equation}\label{C4.52}%\tag{$C4.52$}
u(x)=\int_0^xf(s)ds \;  \forall \ x \in (0,l), v(x)=\int_0^xh(s)ds \; \forall \ x \in (0,l).
\end{equation}
Then since $f\in L^2_0(0,l)$ (resp. $h\in L^2_0(0,l)$), $u \in H^1_0(0,l)$ (resp. $v \in H^1_0(0,l)$). Hence $Y=(u,g,v,r)^T \in \mathcal{H}$ and $RY=U$. Thus $R$ is invertible and its inverse is defined by $R^{-1}U=Y$ with $Y$ defined from $U$ as above. This concludes the proof.
\end{proof}
The way to derive the Riemann invariants $W=(p,\widehat{\varphi}, q, \psi)^T$ given formally by \eqref{C3.2} from $U=(f,g,h,r)^T$ and conversely is described in the next proposition, which proof is left to the reader.
\begin{prop}\label{M}
Let $X$ be defined as in \eqref{C4.5}, and $X_0 \subset X$ be defined in \eqref{C4.38}.
We define the operator $\widehat{M}: \widehat{X} \mapsto X$
by
\begin{equation}\label{C4.53M}%\tag{$C4.53$}
\begin{cases}
\widehat{M}U=W \quad \forall \ U=(f,g,h,r)^T \in \widehat{X} , \mbox{ where }\\
W=(p,\widehat{\varphi}, q, \psi)^T=\Big(-\sqrt{\dfrac{K}{\rho}}f + g, -\sqrt{\dfrac{EI}{I_{\rho}}}h + r, \sqrt{\dfrac{K}{\rho}}f + g,
\sqrt{\dfrac{EI}{I_{\rho}}}h + r\Big)^T.
\end{cases}
\end{equation}

Then $\widehat{M}$ is an isomorphism from $\widehat{X}$ onto $X_0$ and the inverse operator $M^{-1}: X_0 \mapsto \widehat{X}$
is given by
\begin{equation}\label{C4.54}%\tag{$C4.54$}
\begin{cases}
\widehat{M}^{-1}W= U \quad \forall \ W=(p,\widehat{\varphi}, q, \psi)^T \in X_0 , \mbox{ where }\\
U=(f,g, h, r)^T=\Big(\dfrac{1}{2}\sqrt{\dfrac{\rho}{K}}\,\Big(-p+q\Big), \dfrac{1}{2}(p+q), \dfrac{1}{2} \sqrt{\dfrac{I_{\rho}}{EI}}\,\Big(-\widehat{\varphi}+\psi\Big), \dfrac{1}{2}(\widehat{\varphi}+\psi)\Big)^T
\end{cases}
\end{equation}
\end{prop}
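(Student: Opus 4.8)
The plan is to verify directly that the explicit formula for $\widehat{M}^{-1}$ given in \eqref{C4.54} is a genuine two-sided inverse of $\widehat{M}$, and that both maps are bounded linear operators. Since each component of $\widehat{M}U$ (resp. of $\widehat{M}^{-1}W$) is a constant-coefficient linear combination of the components of $U$ (resp. $W$), linearity is immediate and boundedness on the $L^2$-based spaces $\widehat{X}$ and $X_0$ is clear; the only genuine content is to check that $\widehat{M}$ maps $\widehat{X}$ \emph{into} $X_0$ and that $\widehat{M}^{-1}$ maps $X_0$ into $\widehat{X}$. This is exactly the point where the zero-mean constraints defining $L^2_0(0,l)$ match up with the constraints $R_1=R_2=0$ defining $X_0$.

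First I would check that $\widehat{M}(\widehat{X})\subset X_0$. Let $U=(f,g,h,r)^T\in\widehat{X}$ and $W=\widehat{M}U=(p,\widehat{\varphi},q,\psi)^T$. From \eqref{C4.53M} one computes $p-q=-2\sqrt{K/\rho}\,f$ and $\widehat{\varphi}-\psi=-2\sqrt{EI/I_{\rho}}\,h$. Since $f,h\in L^2_0(0,l)$, integrating over $(0,l)$ gives $R_1W=\int_0^l(p-q)\,dx=0$ and $R_2W=\int_0^l(\widehat{\varphi}-\psi)\,dx=0$, so $W\in X_0$ by \eqref{C4.38}. Symmetrically, for $W=(p,\widehat{\varphi},q,\psi)^T\in X_0$ the candidate preimage $U=(f,g,h,r)^T$ of \eqref{C4.54} satisfies $\int_0^l f\,dx=-\tfrac12\sqrt{\rho/K}\,R_1W$ and $\int_0^l h\,dx=-\tfrac12\sqrt{I_{\rho}/EI}\,R_2W$; since $R_1W=R_2W=0$, we get $f,h\in L^2_0(0,l)$ and hence $U\in\widehat{X}$.

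Next I would verify that the two formulas are mutually inverse by direct substitution. For the composition $\widehat{M}^{-1}\circ\widehat{M}$ applied to $U=(f,g,h,r)^T$, the first two slots return $\tfrac12\sqrt{\rho/K}(-p+q)=\tfrac12\sqrt{\rho/K}\,(2\sqrt{K/\rho}\,f)=f$ and $\tfrac12(p+q)=g$, and the analogous computation in the last two slots returns $h$ and $r$; the reverse composition $\widehat{M}\circ\widehat{M}^{-1}$ on $W\in X_0$ returns $W$ by the same arithmetic. Together with the boundedness of both maps, this shows that $\widehat{M}$ is an isomorphism of $\widehat{X}$ onto $X_0$ with inverse \eqref{C4.54}.

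I do not expect a real obstacle here: algebraically $\widehat{M}$ is a fixed invertible $4\times 4$ constant matrix acting slotwise, composed with the first-order identification already set up in Proposition \ref{R}. The only point requiring attention, and the reason the target space must be $X_0$ rather than all of $X$, is the elementary bookkeeping showing that the zero-mean conditions $\int_0^l f=\int_0^l h=0$ defining $\widehat{X}$ correspond exactly, under $\widehat{M}$, to the vanishing of $R_1$ and $R_2$ that defines $X_0$.
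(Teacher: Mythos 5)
Your proposal is correct. Note that the paper gives no proof of Proposition \ref{M} at all --- it explicitly states that the proof \enquote{is left to the reader} --- and your argument is precisely the routine verification that is intended: the componentwise algebra showing \eqref{C4.53M} and \eqref{C4.54} are mutually inverse, together with the bookkeeping that the zero-mean conditions defining $\widehat{X}$ correspond under $\widehat{M}$ exactly to the conditions $R_1W=R_2W=0$ defining $X_0$ in \eqref{C4.38}, which is where the codomain $X_0$ (rather than all of $X$) comes from.
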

We introduced the needed functional frame and gave the idea in Remark \ref{Frame} of which infinitesimal generator, namely the restriction to $X_0$ of $S_{1,C}$, is needed to be compared to the infinitesimal generator $L_1$. In the next Proposition, we show that the requested operator to describe the links between these two infinitesimal generators is the composed operator $\widehat{M}R$.
\begin{prop}\label{RMI}
Let $R$ and $\widehat{M}$ be respectively defined by \eqref{C4.51R} and \eqref{C4.53M}. Then the composed operator $\widehat{M}R$ satisfies the following properties
\begin{equation}\label{4.55}
\begin{cases}
\displaystyle{(\widehat{M}R) \mbox{ is the isomorphism from}: \mathcal{H} \mbox{ onto } X_0,\mbox{ defined by:}}\\
\displaystyle{(\widehat{M}R)Y=(p,\widehat{\varphi}, q, \psi)^T, \quad \forall \ Y=(u,u_2,v,v_2)^T \in \mathcal{H} \mbox{ where:}}\\
\displaystyle{p=-\sqrt{\dfrac{K}{\rho}}u_x + u_2, \widehat{\varphi}=-\sqrt{\dfrac{EI}{I_{\rho}}}v_x + v_2},\\
\displaystyle{q=\sqrt{\dfrac{K}{\rho}}u_x + u_2,
\psi=\sqrt{\dfrac{EI}{I_{\rho}}}v_x + v_2 ,}\\
\end{cases}
\end{equation}
\begin{equation}\label{4.56}
\begin{cases}
\displaystyle{(\widehat{M}R)^{-1}: X_0 \mapsto \mathcal{H} \mbox{ is given by:}}\\
\displaystyle{(\widehat{M}R)^{-1}W=(u,u_2,v,v_2)^T, \quad \forall \ W=(p,\widehat{\varphi}, q, \psi)^T \in X_0 \mbox{ where:}}\\
\displaystyle{u(\cdot)=\dfrac{1}{2}\sqrt{\dfrac{\rho}{K}}\int_0^{\cdot}(-p+q)(s)ds, u_2= \dfrac{1}{2}(p+q)},\\
\displaystyle{v(\cdot)=\dfrac{1}{2} \sqrt{\dfrac{I_{\rho}}{EI}}\int_0^{\cdot}(-\widehat{\varphi}+\psi)(s)ds, v_2=\dfrac{1}{2}(\widehat{\varphi}+\psi)},
\end{cases}
\end{equation}
and the following characterization holds:
\begin{equation}\label{4.57}
(\widehat{M}R)(D(L_1))=D(S_{1,C})\cap X_0.
\end{equation}
\begin{proof}
The proof of \eqref{4.55} and \eqref{4.56} can easily be deduced from Proposition \ref{R} and Proposition \ref{M}. Let us prove \eqref{4.57}. Let $Y=(u,u_2,v,v_2)^T$ be given arbitrarily in $D(L_1)$. This is equivalent to assume that $(u,v) \in (H^2(0,l)\cap H^1_0(0,l))^2$ and $(u_2,v_2) \in (H^1_0(0,l))^2$. We set $(\widehat{M}R)Y=W=(p,\widehat{\varphi}, q, \psi)^T$. Then the components of $W$ are given in \eqref{4.55}. Since $u_x$ and $u_2$ are in $H^1(0,l)$, we deduce that $p$ and $q$ are $H^1(0,l)$. Similarly, since $v_x$ and $v_2$ are in $H^1(0,l)$, we deduce that $\widehat{\varphi}$ and $\psi$ are in $H^1(0,l)$. Moreover thanks once again to \eqref{4.55}, we have $(p+q)=2u_2$ and $\widehat{\varphi} + \psi=2v_2$. This, together with $u_2 \in H^1_0(0,l)$ and $v_2 \in H^1_0(0,l)$, imply that $(p+q)(x)=(\widehat{\varphi} + \psi)(x)=0$ for $x=0$ and $x=l$. This proves that $W \in D(S_{1,C})$. On the other hand, we already know that $\widehat{M}\widehat{X}=X_0$. Thus $W \in D(S_{1,C})\cap X_0$. This proves that $(\widehat{M}R)(D(L_1))\subset D(S_{1,C})\cap X_0$. 

Let us now prove the reverse inclusion. For this, let 
$W=(p,\widehat{\varphi}, q, \psi)^T$ be given arbitrarily in $D(S_{1,C})\cap X_0$. This is equivalent $W \in (H^1(0,l))^4$, 
$(p+q)(x)=(\widehat{\varphi} + \psi)(x)=0$ for $x=0$ and $x=l$ and $\int_0^l(p-q)ds=\int_0^l(\widehat{\varphi}-\psi)ds=0$. We set $(\widehat{M}R)^{-1}W=Y=(u,u_2,v,v_2)^T$. Then the components of $Y$ are given in \eqref{4.56}. Since $p$ and $q$ are in $H^1(0,l)$, we deduce easily that $u \in H^2(0,l)$. Moreover since $\int_0^l(p-q)ds=0$, we also have $u \in H^1_0(0,l)$. In a similar way, we  prove that $v \in H^2(0,l) \cap H^1_0(0,l)$. Using \eqref{4.56} once again and since $(p+q)(0)=(p+q)(l)=0$, we deduce that $u_2 \in H^1_0(0,l)$. We prove that $v_2 \in H^1_0(0,l)$ in a similar way. This proves that $Y \in D(L_1)$. Hence 
$(\widehat{M}R)^{-1}(D(S_{1,C})\cap X_0) \subset D(L_1)$. Therefore $D(S_{1,C})\cap X_0 \subset (\widehat{M}R)(D(L_1))$.
\end{proof}
\end{prop}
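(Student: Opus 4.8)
The plan is to obtain the isomorphism statements \eqref{4.55} and \eqref{4.56} essentially for free by composition, and then to concentrate the real effort on the domain identity \eqref{4.57}. First I would note that by Proposition \ref{R} the map $R\colon\mathcal{H}\to\widehat{X}$ is an isometric isomorphism, and by Proposition \ref{M} the map $\widehat{M}\colon\widehat{X}\to X_0$ is an isomorphism, so their composition $\widehat{M}R\colon\mathcal{H}\to X_0$ is automatically an isomorphism onto $X_0$. The explicit formula \eqref{4.55} then follows by substituting $RY=(u_x,u_2,v_x,v_2)^T$ (that is $f=u_x$, $g=u_2$, $h=v_x$, $r=v_2$) into the defining formula \eqref{C4.53M} of $\widehat{M}$; dually, \eqref{4.56} follows by first applying $\widehat{M}^{-1}$ as in \eqref{C4.54} and then integrating the first and third components through $R^{-1}$ as in Proposition \ref{R}. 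No genuine computation is needed here beyond matching the two sets of formulas.

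The core of the proposition is the domain characterization \eqref{4.57}, which I would establish by a double inclusion. The guiding idea is a dictionary between the two descriptions of the boundary data, read off directly from \eqref{4.55}:
\begin{equation*}
p+q=2u_2,\quad \widehat{\varphi}+\psi=2v_2,\qquad p-q=-2\sqrt{K/\rho}\,u_x,\quad \widehat{\varphi}-\psi=-2\sqrt{EI/I_{\rho}}\,v_x.
\end{equation*}
The first two identities convert the condition $u_2,v_2\in H^1_0(0,l)$ into the Dirichlet conditions $(p+q)(0)=(p+q)(l)=0$ and $(\widehat{\varphi}+\psi)(0)=(\widehat{\varphi}+\psi)(l)=0$ which, after unwinding $\widehat{E}=D$ in \eqref{C4.3} as minus the $2\times2$ identity, are exactly the block conditions $u(0)=\widehat{E}v(0)$ and $v(l)=Du(l)$ defining $D(S_{1,C})$. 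The last two identities, together with $\int_0^l u_x\,dx=u(l)-u(0)$, convert the condition $u,v\in H^1_0(0,l)$ into the zero-mean conditions $\int_0^l(p-q)\,dx=\int_0^l(\widehat{\varphi}-\psi)\,dx=0$ defining $X_0$.

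Concretely, for the forward inclusion I would take $Y=(u,u_2,v,v_2)^T\in D(L_1)$, so $(u,v)\in(H^2\cap H^1_0)^2$ and $(u_2,v_2)\in(H^1_0)^2$; then $u_x,v_x,u_2,v_2\in H^1(0,l)$ give $W=(\widehat{M}R)Y\in(H^1(0,l))^4$, the first pair of identities gives the Dirichlet conditions, and membership in $X_0$ is already guaranteed since $\widehat{M}\widehat{X}=X_0$, whence $W\in D(S_{1,C})\cap X_0$. For the reverse inclusion I would start from $W=(p,\widehat{\varphi},q,\psi)^T\in D(S_{1,C})\cap X_0$, define $Y=(\widehat{M}R)^{-1}W$ via \eqref{4.56}, and verify $Y\in D(L_1)$: since $p,q\in H^1(0,l)$ the antiderivative $u$ lies in $H^2(0,l)$, the relation $u(l)=\tfrac{1}{2}\sqrt{\rho/K}\int_0^l(-p+q)\,dx=0$ coming from the $X_0$ condition gives $u\in H^1_0(0,l)$, and $u_2=\tfrac{1}{2}(p+q)\in H^1_0(0,l)$ comes from the Dirichlet conditions, with the analogous statements for $v$ and $v_2$. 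The step I expect to demand the most care is precisely this bookkeeping of boundary data: correctly reading the block conditions of $D(S_{1,C})$ as scalar Dirichlet conditions on $p+q$ and $\widehat{\varphi}+\psi$, and identifying the two zero-mean conditions of $X_0$ with the vanishing of the traces of $u$ and $v$ at $x=l$.
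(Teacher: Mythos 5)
Your proposal is correct and follows essentially the same route as the paper: the isomorphism statements \eqref{4.55}--\eqref{4.56} are obtained by composing Proposition \ref{R} with Proposition \ref{M}, and the domain identity \eqref{4.57} is proved by the same double inclusion, using $(p+q)=2u_2$, $\widehat{\varphi}+\psi=2v_2$ for the Dirichlet conditions and the zero-mean conditions of $X_0$ for the traces $u(l)=v(l)=0$. Your explicit unwinding of the block conditions $u(0)=\widehat{E}v(0)$, $v(l)=Du(l)$ into the scalar Dirichlet conditions is a detail the paper leaves implicit, but it is the same argument.
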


In the next Corollary, we describe precisely the mathematical transformations which make it possible to pass from the semigroup generated by the unbounded operator $L_1$ on $\mathcal{H}$ to the semigroup generated by the restriction to $X_0$ of $S_{1,C}$, and conversely.

\begin{cor}\label{Cor3}
Let $L_1$ be the operator defined in \eqref{C4.33}-\eqref{C4.34} and the operator $S_{1,C}$ defined in \eqref{C4.9}, and the operators $R$ and $\widehat{M}$ be respectively defined by \eqref{C4.51R} and \eqref{C4.53M}. Then we have the following
identity
\begin{equation}\label{4.58}%\tag{$C4.55$}
\widehat{M}R\; e^{tL_1}\; R^{-1}\widehat{M}^{-1}=e^{tS_{1,C}\small{|}_{X_0}}, \quad \forall  t \geqslant 0.
\end{equation}
\end{cor}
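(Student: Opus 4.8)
The plan is to read the asserted identity \eqref{4.58} as the statement that two $C_0$-semigroups on $X_0$ coincide, and to obtain this from the uniqueness of a strongly continuous semigroup given its infinitesimal generator. Set $\Phi=\widehat{M}R$. By Proposition \ref{RMI}, $\Phi$ is an isomorphism from $\mathcal{H}$ onto $X_0$ with $\Phi^{-1}=R^{-1}\widehat{M}^{-1}$, so the left-hand side of \eqref{4.58} is exactly $\Phi\,e^{tL_1}\,\Phi^{-1}$. Since conjugation by a bounded isomorphism preserves strong continuity and the semigroup law, $(\Phi\,e^{tL_1}\,\Phi^{-1})_{t\ge0}$ is a $C_0$-semigroup on $X_0$, whose generator is $\Phi L_1\Phi^{-1}$ with domain $\Phi(D(L_1))$. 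By Lemma \ref{Lm2}, $S_{1,C}|_{X_0}$ also generates a $C_0$-semigroup on $X_0$. Thus it suffices to prove that these two generators are the same operator, i.e. that they have the same domain and act identically, after which semigroup uniqueness yields \eqref{4.58}.

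The coincidence of domains is immediate from the characterization \eqref{4.57} in Proposition \ref{RMI}, namely $\Phi(D(L_1))=D(S_{1,C})\cap X_0=D(S_{1,C}|_{X_0})$. What then remains, and what I expect to be the computational heart of the proof, is the intertwining relation
$$\Phi\,L_1 Y=S_{1,C}\,\Phi Y,\qquad\forall\,Y\in D(L_1).$$
To verify it I would take $Y=(u,u_2,v,v_2)^T\in D(L_1)$, compute $L_1Y$ from \eqref{C4.33}, apply $R$ (which differentiates the first and third entries in space) and then $\widehat{M}$ from \eqref{C4.53M}; on the other side compute $\Phi Y$ from \eqref{4.55} and apply $S_{1,C}$ through its explicit expression \eqref{C4.42}. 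Substituting the Riemann invariants $p=-\sqrt{K/\rho}\,u_x+u_2$, $q=\sqrt{K/\rho}\,u_x+u_2$, $\widehat{\varphi}=-\sqrt{EI/I_\rho}\,v_x+v_2$, $\psi=\sqrt{EI/I_\rho}\,v_x+v_2$, and using the algebraic identities $\sqrt{I_\rho/(EI)}\,\sqrt{EI/I_\rho}=1$ and $\sqrt{\rho K}\,\sqrt{K/\rho}=K$, one checks that the two four-vectors agree componentwise. This is precisely the classical reduction of the damped Timoshenko system to its diagonal Riemann-invariant first-order form, and the care needed is purely in tracking the coefficients.

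With both facts established, $\Phi L_1\Phi^{-1}$ and $S_{1,C}|_{X_0}$ are one and the same operator on $X_0$ (same domain $D(S_{1,C})\cap X_0$, same action on it), so by uniqueness of the $C_0$-semigroup generated by a given operator the two semigroups coincide, which is \eqref{4.58}. Alternatively, to avoid invoking conjugation of generators, I would argue directly: for $Y_0\in D(L_1)$ the curve $W(t)=\Phi\,e^{tL_1}Y_0$ remains in $D(S_{1,C})\cap X_0$, satisfies $W(0)=\Phi Y_0$ and $W'(t)=\Phi L_1 e^{tL_1}Y_0=S_{1,C}W(t)$ by the intertwining relation, whence by uniqueness for the Cauchy problem associated with $S_{1,C}|_{X_0}$ one gets $W(t)=e^{tS_{1,C}|_{X_0}}\Phi Y_0$; since $\Phi(D(L_1))$ is dense in $X_0$ and all operators involved are bounded, this extends to all of $X_0$ and gives \eqref{4.58}.
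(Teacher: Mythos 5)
Your proposal is correct and is in substance the paper's own proof: the paper establishes \eqref{4.58} precisely by your ``alternative'' direct argument, taking $W_0\in D(S_{1,C})\cap X_0$, setting $Y_0=(\widehat{M}R)^{-1}W_0\in D(L_1)$ via \eqref{4.57}, showing through the Riemann-invariant computation that $W(t)=\widehat{M}R\,e^{tL_1}Y_0$ solves the Cauchy problem for $S_{1,C}\small{|}_{X_0}$, and concluding by uniqueness together with density of $D(S_{1,C})\cap X_0$ in $X_0$ (Lemma \ref{Lm1}\,$(ii)$). Your primary packaging --- identifying the generator of the conjugated semigroup as $\Phi L_1\Phi^{-1}$ with domain $\Phi(D(L_1))=D(S_{1,C}\small{|}_{X_0})$, verifying the intertwining relation $\Phi L_1=S_{1,C}\Phi$ on $D(L_1)$, and invoking uniqueness of the semigroup generated by a given operator --- is the same computation expressed in generator language and is equally valid.
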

\begin{proof}
Let $W_0=(p_0,\widehat{\varphi}_0, q_0, \psi_0)^T$ be given arbitrarily in $D(S_{1,C})\cap X_0$. We set 
\begin{equation}\label{4.59}%\tag{$C4.55$}
Y_0=(\widehat{M}R)^{-1}W_0=(u_0,u_{2,0},v_0,v_{2,0})^T.
\end{equation}
Thanks to \eqref{4.57} in Proposition \ref{RMI}, we have $Y_0 \in D(L_1)$. We define for all $t\geqslant 0$: 

$Y(t)=e^{tL_1}Y_0=:(u,u_2,v,v_2)^T(t)$. Since  $Y_0 \in D(L_1)$ $Y \in \mathcal{C}([0,\infty); D(L_1)) \cap \mathcal{C}^1([0,\infty); \mathcal{H})$ and solves the Cauchy problem
$$
Y'(t)=L_1Y(t) \quad \forall \ t \geqslant 0, Y(0)=Y_0,
$$
which is equivalent to
\begin{equation}\label{4.60}%\tag{$C4.55$}
\begin{cases}
u_2=u_t, v_2=v_t \quad \forall \ t \geqslant 0, \forall \ x \in (0,l),\\
u_{tt}=\dfrac{K}{\rho}(u_{x}-v)_{x},  \quad t\geqslant 0, x \in (0,l), \\[1em]
v_{tt}=\dfrac{EI}{I_{\rho }}v_{xx}+\dfrac{K}{I_{\rho }}u_{x}-\dfrac{b}{I_{\rho }}v_{t}, \quad t\geqslant 0, x \in (0,l), \\
u(t,0)=u(t,l)=0,\quad v(t,0)=v(t,l)=0, \quad t \geqslant 0,\\
u(0,\cdot)=u_0(\cdot),u_t(0,\cdot)=u_{2,0}(\cdot), v(0,\cdot)=v_0(\cdot),v_t(0,\cdot)=v_{2,0}(\cdot), \quad x \in (0,l).
\end{cases}
\end{equation}
We now set $W(\cdot)=:\widehat{M}RY(\cdot)=:(p,\widehat{\varphi}, q, \psi)^T(\cdot)$. By the definition \eqref{4.59} of $Y_0$, we have
\begin{equation}\label{4.61}%\tag{$C4.55$}
W(0)=W_0
\end{equation}
Thanks to \eqref{4.57}, $W(t) \in D(S_{1,C} \cap X_0$ for all $t \geqslant 0$, and thanks to Proposition \ref{RMI}, the components of $W$ are given by the last two equations in \eqref{4.55}, that is
\begin{equation}\label{4.62}%\tag{$C4.55$}
\begin{cases}
\displaystyle{p=-\sqrt{\dfrac{K}{\rho}}u_x + u_t, \widehat{\varphi}=-\sqrt{\dfrac{EI}{I_{\rho}}}v_x + v_t},\\
\displaystyle{q=\sqrt{\dfrac{K}{\rho}}u_x + u_t,
\psi=\sqrt{\dfrac{EI}{I_{\rho}}}v_x + v_t,}\\
\end{cases}
\end{equation}
We just now just need to check that $W$ solves the Cauchy problem
$$
W'(t)=S_{1,C}\small{|}_{X_0}W(t) \quad \forall \ t \geqslant 0, W(0)=W_0.
$$
which is equivalent to check that $(p,\widehat{\varphi}, q, \psi)$ solves the system
\begin{equation}\label{4.63}%\tag{$C4.55$}
\begin{cases}
p_t=-\sqrt{K/\rho}\; p_{x} + (K/(2\rho)) \sqrt{I_{\rho}/ E\, I}\; (\widehat{\varphi}-\psi), \\
\widehat{\varphi}_t=-\sqrt{E\, I/I_{\rho}}\; \widehat{\varphi}_{x}
-(\sqrt{\rho K}/(2I_{\rho}))\;(p-q) - (b/(2I_{\rho}))\;(\widehat{\varphi}+\psi), \\
q_t=\sqrt{K/\rho}\; q_{x} + (K/(2\rho))\sqrt{I_{\rho}/ E\, I}\; (\widehat{\varphi}-\psi), \\
\psi_t=\sqrt{E\, I/I_{\rho}}\; \psi_{x}
-(\sqrt{\rho K}/(2I_{\rho}))\;(p-q) - (b/(2I_{\rho}))\;(\widehat{\varphi}+\psi),\\
W(0)=W_0.
\end{cases}
\end{equation}
We get easily from \eqref{4.62} that 
\begin{equation}\label{4.64}
(\widehat{\varphi}-\psi)= -2\sqrt{E\, I / I_{\rho}}\; v_x, \ p-q= -2 \sqrt{K/\rho}\; u_x, \ (\widehat{\varphi}+\psi)=2v_t, p+q=2u_t.
\end{equation}
Differentiating with respect to $t$ the expression of $p$ given in the first line of \eqref{4.62}, we get
$$
p_t=-\sqrt{\dfrac{K}{\rho}}u_{xt}+ u_{tt} 
$$
Similarly, differentiating with respect to $x$ the expression of $p$ given in the first line of \eqref{4.62}, and multiplying
the resulting identity by $\sqrt{\dfrac{K}{\rho}}$, we get
$$
(\sqrt{K/\rho})\; p_{x}=-(K/\rho)\; u_{xx} + \sqrt{K/\rho}\; u_{tx}
$$
Using the two above identities together with the second line of \eqref{4.60} (which gives the system satisfied by $(u,v)$), we get
$$
p_t+\sqrt{K/\rho}\; p_{x}=u_{tt}-(K/\rho)\; u_{xx}=-(K/\rho)\;v_x
$$
Using \eqref{4.64} in the above identity, we obtain 
$$
p_t+\sqrt{K/\rho}\; p_{x}  - (K/(2\rho)) \sqrt{I_{\rho}/ E\, I}\; (\widehat{\varphi}-\psi)=0
$$
This proves the first identity of \eqref{4.63}.
 We proceed in a similar way to prove that
$$
\widehat{\varphi}_t+\sqrt{E\, I/I_{\rho}}\; \widehat{\varphi}_{x}=v_{tt} - E\, I/I_{\rho}v_{xx}=\dfrac{K}{I_{\rho }}u_{x}-\dfrac{b}{I_{\rho }}v_{t}
$$
where we used the third equation of \eqref{4.60} for the last equality.
Using \eqref{4.64}, we deduce from the above identity, the second identity of \eqref{4.63}. We proceed in a similar way to prove the third and fourth identity of \eqref{4.63}. 
Thus we proved that 
$$
\widehat{M}R\; e^{tL_1}\; R^{-1}\widehat{M}^{-1}W_0=e^{tS_{1,C}\small{|}_{X_0}}W_0, \quad \forall  t \geqslant 0,\; \forall \ W_0 \in D(S_{1,C})\cap X_0.
$$
Thanks to Lemma \ref{Lm1}-$(ii)$, $D(S_{1,C})\cap X_0$ is dense in $X_0$. This together with the above identity proves \eqref{4.58}. This concludes the proof.
\end{proof}

\begin{prop}\label{P2}
We consider the operator $L_1$ defined in \eqref{C4.33}-\eqref{C4.34} and the operator $S_{1,C}$ defined in \eqref{C4.9}. Then we have the following property:
\begin{equation}\label{C4.50}%\tag{$C4.50$}
r_e(e^{tL_1})=r_e(e^{tS_{1,C}}), \quad \forall \ t \geqslant 0.
\end{equation}
\end{prop}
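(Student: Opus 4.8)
The plan is to assemble the statement directly from the three preparatory results already established, so that no genuinely new argument is required at this stage: all the difficulty has been absorbed into the construction of the functional frame and the preceding lemmas. First I would recall from Proposition \ref{RMI} that the composed operator $\widehat{M}R$ is an isomorphism from $\mathcal{H}$ onto $X_0$, with inverse $R^{-1}\widehat{M}^{-1}=(\widehat{M}R)^{-1}$ given explicitly by \eqref{4.55}--\eqref{4.56}. This furnishes the bridge between the two ambient Hilbert spaces on which the semigroups $(e^{tL_1})_{t\geq 0}$ and $(e^{tS_{1,C}|_{X_0}})_{t\geq 0}$ act.

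The key step is the conjugation identity of Corollary \ref{Cor3}, namely $\widehat{M}R\,e^{tL_1}\,R^{-1}\widehat{M}^{-1}=e^{tS_{1,C}|_{X_0}}$ for all $t\geq 0$. I would read this as saying that, for each fixed $t\geq 0$, the bounded operator $e^{tL_1}$ on $\mathcal{H}$ and the bounded operator $e^{tS_{1,C}|_{X_0}}$ on $X_0$ are intertwined by the isomorphism $\widehat{M}R$. I would then apply Proposition \ref{Inv} with the identifications $F=\mathcal{H}$, $G=X_0$, $J=\widehat{M}R\in\mathcal{L}(\mathcal{H},X_0)$ and $R=e^{tS_{1,C}|_{X_0}}\in\mathcal{L}(X_0)$; since Corollary \ref{Cor3} gives precisely $J^{-1}RJ=e^{tL_1}$, Proposition \ref{Inv} yields
\begin{equation*}
r_e(e^{tL_1})=r_e(e^{tS_{1,C}|_{X_0}}),\quad\forall\ t\geq 0.
\end{equation*}

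Finally I would invoke property $(iii)$ of Lemma \ref{Lm2}, which states that $r_e(e^{tS_{1,C}})=r_e(e^{tS_{1,C}|_{X_0}})$ for all $t\geq 0$, and chain it with the previous equality to obtain \eqref{C4.50}. The only point demanding care is the bookkeeping of which space plays the role of $F$ and which that of $G$ in Proposition \ref{Inv}, together with verifying that the intertwining operator and its inverse are exactly the ones provided by Proposition \ref{RMI}. There is essentially no analytic obstacle left at this stage, because the genuine difficulties were disposed of beforehand: the invariance of $X_0$ under $(e^{tS_{1,C}})_{t\geq 0}$ (Lemma \ref{Lm1}), the density of $D(S_{1,C})\cap X_0$ in $X_0$, the domain identity $(\widehat{M}R)(D(L_1))=D(S_{1,C})\cap X_0$ of Proposition \ref{RMI}, and the invariance of the essential spectral radius under isomorphic conjugation (Proposition \ref{Inv}) and under the projection onto $X_0$ (Proposition \ref{UptoPi}, used inside Lemma \ref{Lm2}).
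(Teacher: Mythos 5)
Your proposal is correct and follows exactly the same route as the paper: Corollary \ref{Cor3} combined with Proposition \ref{Inv} (with $F=\mathcal{H}$, $G=X_0$, $J=\widehat{M}R$) gives $r_e(e^{tL_1})=r_e(e^{tS_{1,C}|_{X_0}})$, and property $(iii)$ of Lemma \ref{Lm2} then yields \eqref{C4.50}. Your identification of the roles of $F$, $G$ and $J$ in Proposition \ref{Inv} is the correct one, and your bookkeeping matches the paper's argument.
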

\begin{proof}
Thanks to Corollary \ref{Cor3} and to Proposition \ref{Inv}, we have $r_e(e^{tL_1})=r_e(e^{tS_{1,C}\small{|}_{X_0}})$ for all $t \geqslant 0$. This together with the property $(iii)$ in Lemma \ref{Lm2} allows us to conclude the proof.
\end{proof}
Thanks to the new results we stated above, we can now prove the following result that gives the complete proof of the identity between the essential radius of the strongly continuous semigroup generated by $L_1$ and the essential radius of the strongly continuous semigroup generated by $S_{1,C_0}$:
\begin{prop}\label{P2C0}
We consider the operator $L_1$ defined in \eqref{C4.33}-\eqref{C4.34} and the operator $S_{1,C_0}$ defined in \eqref{C4.9} with $C_0$ (defined in \eqref{C0}) replacing $C$. Then we have the following property:
\begin{equation}\label{C4.51}%\tag{$C4.50$}
r_e(e^{tL_1})=r_e(e^{tS_{1,C_0}}), \quad \forall \ t \geqslant 0.
\end{equation}
\end{prop}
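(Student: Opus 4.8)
The plan is to prove \eqref{C4.51} by chaining together the two comparison identities already established in this section with the comparison theorem of Neves--Ribeiro--Lopes. Concretely, I would build the bridge between $L_1$ and $S_{1,C_0}$ through the two intermediate operators $S_{1,C}$ and $S_{2,C}$, assembling the chain
\begin{equation*}
r_e(e^{tL_1})=r_e(e^{tS_{1,C}})=r_e(e^{tS_{2,C}})=r_e(e^{tS_{1,C_0}}),\quad \forall\ t\geqslant 0.
\end{equation*}

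First I would invoke Proposition \ref{P2}, which yields the first equality $r_e(e^{tL_1})=r_e(e^{tS_{1,C}})$ directly; this is the substantive analytic step, already carried out, transporting the problem from the energy space $\mathcal{H}$ (where $L_1$ lives) to the Riemann-invariant space $X$ (where $S_{1,C}$ lives) via the isomorphism $\widehat{M}R$ and the reduction to the invariant subspace $X_0$. Next I would apply Proposition \ref{P1} to obtain the second equality $r_e(e^{tS_{1,C}})=r_e(e^{tS_{2,C}})$, which lifts the dynamics to $H=X\times\mathbb{R}^2$ endowed with the dynamic boundary condition, at the cost of a finite-rank, hence compact, correction. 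The final equality $r_e(e^{tS_{2,C}})=r_e(e^{tS_{1,C_0}})$ is precisely the content of the Neves--Ribeiro--Lopes comparison theorem recalled in Remark \ref{Rk4}, upon identifying $L_2$ with $S_{1,C_0}$ as noted in Remark \ref{L2C_0}, where $C_0$ is the diagonal part of $C$.

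The main obstacle, and the reason the detour through $S_{2,C}$ is unavoidable, is that one cannot pass from $S_{1,C}$ to $S_{1,C_0}$ directly. Although $S_{1,C}=S_{1,C_0}-(C-C_0)$ with $(C-C_0)\in\mathcal{L}(X)$, the perturbation $C-C_0$ collecting the off-diagonal coupling is only a bounded multiplication operator and is \emph{not} compact, so the Duhamel difference $e^{tS_{1,C}}-e^{tS_{1,C_0}}$ need not be compact and the essential spectral radius is not stable under such a perturbation. This is exactly where the theorem of \cite{NRL1986} is essential: it absorbs the off-diagonal coupling through the dynamic boundary condition formulation. The one hypothesis I would verify carefully before applying it is that the positive eigenvalues $\sqrt{K/\rho}$ and $\sqrt{EI/I_{\rho}}$ of $\widehat{K}$ are distinct, and likewise the negative ones; this holds precisely under the distinct-speeds assumption \eqref{DS}, the standing hypothesis of the second statement of Theorem 2.2. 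With the chain assembled and this hypothesis checked, \eqref{C4.51} follows immediately.
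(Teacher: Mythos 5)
Your chain is exactly the paper's proof: Proposition \ref{P2} for $r_e(e^{tL_1})=r_e(e^{tS_{1,C}})$, then Proposition \ref{P1} for $r_e(e^{tS_{1,C}})=r_e(e^{tS_{2,C}})$, then the Neves--Ribeiro--Lopes comparison for the last link. The single point of divergence is how that last equality $r_e(e^{tS_{2,C}})=r_e(e^{tS_{1,C_0}})$ is justified. You cite Theorem A of \cite{NRL1986} as a black box; the paper cites Theorem A \emph{together with} Proposition \ref{UptoPi}, because (as explained in Remark \ref{ThmA}) the published argument of \cite{NRL1986} only establishes $r_e(e^{tS_{2,C}})=r_e(e^{tS_{1,C_0}}\Pi)$, where $\Pi$ is the projection defined in \eqref{C4.7}: the two semigroups act on the different spaces $H=X\times\mathbb{R}^2$ and $X$, so they cannot differ by an additive compact operator, and the remaining identity $r_e(e^{tS_{1,C_0}}\Pi)=r_e(e^{tS_{1,C_0}})$ does not follow from the usual compact-perturbation invariance of the essential spectral radius; it is precisely what Proposition \ref{UptoPi} supplies. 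Since completing this missing argument of \cite{NRL1986} is one of the stated purposes of the paper, your proof is acceptable only if one takes the statement of Theorem A on faith, and it omits the one ingredient the paper considers essential at this step. Your side remarks are sound and consistent with the paper: the perturbation $C-C_0$ is a bounded but non-compact multiplication operator, so no direct passage from $S_{1,C}$ to $S_{1,C_0}$ is possible and the detour through $S_{2,C}$ is genuinely needed; and the hypothesis of \cite{NRL1986} that the positive (respectively negative) eigenvalues of $\widehat{K}$ be distinct is exactly the distinct-speeds assumption \eqref{DS}.
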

\begin{proof}
Thanks to Proposition \ref{P2} and to Proposition \ref{P1}, we respectively have $r_e(e^{tL_1})=r_e(e^{tS_{1,C}})$  and $r_e(e^{tS_{1,C}})=r_e(e^{tS_{2,C}})$ for all $t \geqslant 0$. Hence we have
$r_e(e^{tL_1})=r_e(e^{tS_{2,C}})$ for all $t \geqslant 0$. Thanks to Theorem A in \cite{NRL1986}, and to the requested result of Proposition \ref{UptoPi} for a complete proof of Theorem A in
\cite{NRL1986}, we have $r_e(e^{tS_{2,C}})=r_e(e^{tS_{1,C_0}})$.  The two last identities  allow us to conclude the proof.
\end{proof}
\begin{oss}\label{ThmA}
Note that the arguments of the proof of Theorem A in \cite{NRL1986} stated at the pages 334-335 allow only to prove that 
$$
r_e(e^{tS_{2,C}})=r_e(e^{tS_{1,C_0}}\Pi) \quad \forall \ t \geqslant 0,
$$
where $\Pi$ is defined in \eqref{C4.7}. The proof that 
$$
r_e(e^{tS_{1,C_0}}\Pi) = r_e(e^{tS_{1,C_0}}) \quad \forall \ t \geqslant 0
$$
is not given. 

This last identity results from an application of our Proposition \ref{UptoPi}. For understanding why the general property proved in Proposition \ref{UptoPi} is not directly resulting from the property of invariance of the essential radius of bounded operators through the action of additive compact operators, one should note that the two operators $e^{tS_{2,C}}$ and $e^{tS_{1,C_0}}$ are acting on different Hilbert spaces, which are respectively the Hilbert space $H=(L^2(0,l))^4 \times \mathbb{R}^2$ and the Hilbert space $(L^2(0,l))^4$, so that they cannot differ by the action of an additive compact operator. An invariance property of the essential spectral radius by the right multiplicative action by the orthogonal projection operator on a suitable subspace has to be proved. This is performed in our Proposition \ref{UptoPi} (see also Proposition \ref{UptoPiB} for a more general formulation in Banach spaces). 
\end{oss}

\begin{oss}\label{L1L2}
Note that the operator $S_{1,C_0}$ corresponds to the operator $L_2$ in \cite{SW2003}. The above Proposition gives the exact and complete proof of the claim stated at page 5 of \cite{SW2003} that:

\enquote{
$$
r_e(e^{tL_1})=r_e(e^{tL_2}) \quad \forall \ t \geqslant 0.
$$
}

This equality, which corresponds exactly to the identity \eqref{C4.51} with our notation, does not result from a simple application of Theorem A of \cite{NRL1986} (see 324 of \cite{NRL1986}) as written at page 5 of \cite{SW2003}. A complementary result has to be proved for the complete proof of Theorem A in \cite{NRL1986} as explained in the Remark \ref{ThmA}.
\end{oss}

Let us now complete some further missing arguments in \cite{SW2003} as described now.
At page 5 of \cite{SW2003} the authors quote page 324 of \cite{NRL1986} to write that:

\enquote{
$$
r_e(e^{tL_2})=e^{\alpha t},
$$
}

where

\enquote{ 
$$
\alpha=\sup\{\Re(\lambda), \lambda \in \sigma(L_2)\}
$$
}

In order to determine the value of $\alpha$, the authors of \cite{SW2003} compute (see the pages 5 and 6) the set of the real part formed by the eigenvalues of $L_2$, and conclude that
$\alpha=0$. However, we recall that, the set of eigenvalues of $S$, defined by $\sigma_p(S)=:\{\lambda \in \mathbb{C},(\lambda Id_X -S) \mbox{ is not injective} \}$ is included in $\sigma(S)$. In the general case, one cannot assert without proving further properties that $\sigma_p(S)=\sigma(S)$. Hence the proof that $\alpha=\sup\{\Re(\lambda), \lambda \in \sigma(L_2)\}=0$
is not complete, even though, as we shall now prove, this identity is correct.

Thus, we complete here the proof of this last identity. For this and for coherence with our present paper, we use in the sequel our notation rather than the ones used in \cite{SW2003}, that is
$L_2$ is replaced by the notation $S_{1,C_0}$, and $\alpha$ is denoted by $\alpha_0$ as in \cite{NRL1986}.

\begin{prop}\label{spectrumS1}
We consider the operator $S_{1,C_0}$ defined in \eqref{C4.9} with $C_0$ replacing $C$, $C_0$ being defined in \eqref{C0}. We recall that $S_{1,C_0}$ is an unbounded operator in the Hilbert space $X$ defined in \eqref{C4.5}. Assuming that $b$ is a nonnegative function on $[0,l]$, we have:
\begin{equation}\label{Cb4.52}%\tag{$C4.50$}$, 
\sigma(S_{1,C_0})=\Big\{\dfrac{k\pi }{l} \sqrt{\dfrac{K}{\rho}} i, k \in \mathbb{Z}\Big\} \cup \Big\{- \dfrac{1}{2l \,I_{\rho}} \int_0^l b(y)dy +  \dfrac{k\pi }{l} \sqrt{\dfrac{E I }{I_{\rho}}} i, k \in \mathbb{Z}\Big\},
\end{equation}
and the spectral bound of $S_{1,C_0}$ is given by:
\begin{equation}\label{C4.53}
\alpha_0=\sup\{\Re(\lambda), \lambda \in \sigma(S_{1,C_0})\}=0. 
\end{equation}
Moreover we have
\begin{equation}\label{C4.53b}
r(e^{tS_{1,C_0}})=r_e(e^{tS_{1,C_0}})=e^{\alpha_0 t}=1 \quad \forall \ t \geqslant 0
\end{equation}
where $r(e^{tS_{1,C_0}})$ stands for the spectral radius of the bounded operator $(e^{tS_{1,C_0}})$.
\end{prop}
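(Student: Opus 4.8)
The plan is to exploit the fact that both $\widehat{K}$ and $C_0$ are diagonal, so that the eigenvalue problem $S_{1,C_0}U=\lambda U$, i.e. $\widehat{K}U_x=-(C_0+\lambda)U$, decouples into four scalar first order ODEs coupled only through the boundary conditions. Writing $U=(p,\widehat{\varphi},q,\psi)^T$ and recalling that the domain \eqref{C4.9} with $\widehat{E}=D=-\mathrm{Id}$ amounts to $(p+q)(0)=(p+q)(l)=(\widehat{\varphi}+\psi)(0)=(\widehat{\varphi}+\psi)(l)=0$, I would first integrate the scalar equations explicitly: $p(x)=p(0)e^{-\lambda\sqrt{\rho/K}\,x}$ and $q(x)=q(0)e^{\lambda\sqrt{\rho/K}\,x}$, while $\widehat{\varphi}$ and $\psi$ additionally pick up the factor $\exp\big(\mp\tfrac{1}{2I_\rho}\sqrt{I_\rho/EI}\int_0^x b\big)$ coming from the damping. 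Imposing the boundary conditions on the $(p,q)$ pair yields the characteristic relation $e^{2\lambda l\sqrt{\rho/K}}=1$, and on the $(\widehat{\varphi},\psi)$ pair the relation $\exp\big(2\lambda l\sqrt{I_\rho/EI}+\tfrac{1}{I_\rho}\sqrt{I_\rho/EI}\int_0^l b\big)=1$; solving these gives precisely the two families of eigenvalues listed in \eqref{Cb4.52}.

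To obtain the full spectrum, and not merely the point spectrum, I would show that $S_{1,C_0}$ has compact resolvent. Since $S_{1,C_0}$ generates a $C_0$-semigroup on $X$ (Theorem B of \cite{NRL1986}), the resolvent set is nonempty; fixing $\lambda_0\in\rho(S_{1,C_0})$, the identity $U_x=\widehat{K}^{-1}\big(F-(C_0+\lambda_0)U\big)$ shows that $(\lambda_0-S_{1,C_0})^{-1}$ maps $X=(L^2(0,l))^4$ boundedly into $(H^1(0,l))^4$, which embeds compactly into $X$ by Rellich's theorem. Hence $\sigma(S_{1,C_0})=\sigma_p(S_{1,C_0})$ coincides with the discrete set \eqref{Cb4.52}. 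Reading off the real parts gives \eqref{C4.53}: the first family lies on the imaginary axis, while the second has real part $-\tfrac{1}{2lI_\rho}\int_0^l b\le 0$ because $b\ge 0$, so that $\alpha_0=\sup\{\Re\lambda:\lambda\in\sigma(S_{1,C_0})\}=0$, the supremum being attained at $\lambda=0$.

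For \eqref{C4.53b} I would treat $r$ and $r_e$ separately. For the spectral radius, I would establish that $S_{1,C_0}$ is dissipative: after integration by parts $\Re\langle\widehat{K}U_x,U\rangle_X$ reduces to boundary terms that cancel thanks to the relations $|p(0)|=|q(0)|$, $|p(l)|=|q(l)|$, $|\widehat{\varphi}(0)|=|\psi(0)|$, $|\widehat{\varphi}(l)|=|\psi(l)|$ forced by the boundary conditions, while $\Re\langle C_0U,U\rangle_X=\int_0^l\tfrac{b}{2I_\rho}(|\widehat{\varphi}|^2+|\psi|^2)\ge 0$; hence $\Re\langle S_{1,C_0}U,U\rangle_X\le 0$. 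By Lumer--Phillips, $(e^{tS_{1,C_0}})_{t\ge0}$ is a contraction semigroup, so its growth bound is $\le 0$ and $r(e^{tS_{1,C_0}})=e^{\omega t}\le 1$ by \eqref{SRW}. Conversely, since $\lambda=0\in\sigma_p(S_{1,C_0})$, the point spectral inclusion gives $1=e^{0\cdot t}\in\sigma(e^{tS_{1,C_0}})$, so $r(e^{tS_{1,C_0}})\ge 1$ and therefore $r(e^{tS_{1,C_0}})=1$. For the essential spectral radius I would invoke Theorem A of \cite{NRL1986} (completed through Proposition \ref{UptoPi}), which yields $r_e(e^{tS_{1,C_0}})=e^{\alpha_0 t}=1$; this is consistent with \eqref{relr}, since $r_e\le r=1$.

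The main obstacle I anticipate is not the eigenvalue computation, which is routine once the system is decoupled, but the rigorous passage from the point spectrum to the full spectrum, namely proving $\sigma=\sigma_p$. The compact-resolvent argument hinges on the nonemptiness of the resolvent set and on the regularizing property $X\to(H^1(0,l))^4$; care is needed to justify that the explicitly integrated first order system produces $H^1$ solutions compatible with the boundary conditions, and to keep the two characteristic equations exactly aligned with the families in \eqref{Cb4.52}. A secondary delicate point is the cancellation of the boundary terms in the dissipativity estimate, which relies crucially on the specific form $\widehat{E}=D=-\mathrm{Id}$ of the boundary operators.
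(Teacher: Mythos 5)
Your first three steps are correct and take a genuinely different route from the paper. For the spectrum \eqref{Cb4.52} the paper never separates point spectrum from full spectrum: it solves the inhomogeneous resolvent equation $(\lambda Id_X-S_{1,C_0})U=Z$ explicitly for arbitrary $Z\in X$ (the boundary conditions reduce it to a $2\times 2$ system for $p(0)$ and $\widehat{\varphi}(0)$, uniquely solvable exactly when $\lambda$ avoids the two families), so invertibility is verified by hand. Your decomposition — explicit point spectrum, then $\sigma=\sigma_p$ via compact resolvent and Rellich — reaches the same conclusion and is a legitimate, arguably cleaner, alternative; it does use that $C_0\in\mathcal{L}(X)$, i.e. $b\in L^\infty(0,l)$, but that is a standing assumption already needed for the bounded-perturbation generation argument. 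Likewise your proof that $r(e^{tS_{1,C_0}})=1$ by dissipativity (the boundary terms cancel exactly as you say, because $\widehat{E}=D=-\mathrm{Id}$ forces $|p|=|q|$ and $|\widehat{\varphi}|=|\psi|$ at both ends) combined with the point spectral inclusion $e^{t\sigma_p(S)}\subset\sigma_p(e^{tS})$ is correct and more self-contained than the paper's, which obtains $r$ and $r_e$ simultaneously from the literature.

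The genuine gap is in the final step, which is precisely the one the non-exponential stability result (Theorem \ref{NES}) rests on. You invoke \enquote{Theorem A of \cite{NRL1986} (completed through Proposition \ref{UptoPi})} to conclude $r_e(e^{tS_{1,C_0}})=e^{\alpha_0 t}=1$. But Theorem A of \cite{NRL1986} is the \emph{comparison} theorem: completed by Proposition \ref{UptoPi} (see Remark \ref{ThmA}), it yields $r_e(e^{tS_{2,C}})=r_e(e^{tS_{1,C_0}})$, i.e. it transfers the essential spectral radius from the coupled problem with dynamic boundary conditions to the diagonal one; it computes nothing about $r_e(e^{tS_{1,C_0}})$ itself, so it cannot deliver the value $e^{\alpha_0 t}$. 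Since \eqref{relr} only gives the upper bound $r_e\leqslant r=1$, the lower bound $r_e(e^{tS_{1,C_0}})\geqslant 1$ is left unproved in your proposal. Two ways to close it: (a) cite what the paper actually cites, namely Theorem B (pages 324--325) together with Lemma 5 (page 336) of \cite{NRL1986}, which give $r(e^{tS_{1,C_0}})=r_e(e^{tS_{1,C_0}})=e^{\alpha_0 t}$ for the diagonal problem; or (b) extract it from your own computation: for fixed $t>0$ the unimodular numbers $\mu_k=e^{ik\pi t\sqrt{K/\rho}/l}$, $k\in\ZZ$, are eigenvalues of $e^{tS_{1,C_0}}$; if $t\sqrt{K/\rho}/(2l)\in\QQ$ this set is finite and some $\mu_k$ is an eigenvalue of infinite geometric multiplicity, so $e^{tS_{1,C_0}}-\mu_k$ is not Fredholm and, by \eqref{C4.24} and Atkinson's theorem, $r_e\geqslant|\mu_k|=1$; if $t\sqrt{K/\rho}/(2l)\notin\QQ$ the $\mu_k$ are dense in the unit circle, which is impossible if the circle lay in the Fredholm domain, since spectral points in the unbounded connected component of the Fredholm domain (which meets the resolvent set) are isolated — so again $r_e\geqslant 1$. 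Either repair makes your proof complete.
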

\begin{proof}
We have $\rho(S_{1,C_0})=\{ \lambda \in \mathbb{C}, (\lambda Id_X - S_{1,C_0}) \mbox{ is invertible}\}$. Thus $\lambda \in \mathbb{C} \in \rho(S_{1,C_0})$ if and only if for all $Z=(z_1,z_2, z_3, z_4)^T \in X$, there exists a unique $Y=(p,\widehat{\varphi}, q, \psi)^T \in D(S_{1,C_0})$ such that $(\lambda Id_X - S_{1,C_0})U=Z$. 

Let  $Z=(z_1,z_2, z_3, z_4)^T$ be any arbitrary vector function in $X$. Since $\widehat{K}$ and $C_0$ are diagonal matrices, the equation $(\lambda Id_X - S_{1,C_0})U=Z$, is equivalent to $Y=(p,\widehat{\varphi}, q, \psi)^T \in D(S_{1,C_0})$ and to the set of equations to be satisfied for the components of $Y$
$$
\begin{cases}
p_x + \lambda \sqrt{\rho/K} \,p= \sqrt{\rho/K}\, z_1,\\
\widehat{\varphi}_x + \sqrt{I_{\rho}/(EI)} \Big(\dfrac{b(y)}{2I_{\rho}} + \lambda\Big)\, \widehat{\varphi}= \sqrt{I_{\rho}/(EI)} \, z_2,\\
q_x - \lambda \sqrt{\rho/K} \, q= -\sqrt{\rho/K} \, z_3,\\
\psi_x - \sqrt{I_{\rho}/(EI)} \Big(\dfrac{b(y)}{2I_{\rho}} + \lambda\Big)\, \psi= -\sqrt{I_{\rho}/(EI)} \, z_4.
\end{cases}
$$
Multiplying by an appropriate exponential term  each of the above four equations, to integrate these equations, we easily deduce that for all $x \in [0,l]$, we have:
$$
\begin{cases}
p(x )=p(0) e^{-\lambda \sqrt{\rho/K} \, x} + \sqrt{\rho/K}\, \int_0^x e^{-\lambda \sqrt{\rho/K} \, (x-s)} z_1(s)ds,\\
\widehat{\varphi}(x)= \widehat{\varphi}(0)e^{- \sqrt{I_{\rho}/(EI)} \Big(\lambda \,x + \dfrac{1}{2I_{\rho}} \int_0^x b(s)ds\Big)
} + \sqrt{I_{\rho}/(EI)} \int_0^x e^{-\sqrt{I_{\rho}/(EI)} \Big(\lambda \,(x-s) + \dfrac{1}{2I_{\rho}} \int_s^x b(\tau)d\tau\Big)
} \, z_2(s)ds,\\
q(x )=q(0) e^{\lambda \sqrt{\rho/K} \, x} - \sqrt{\rho/K}\, \int_0^x e^{\lambda \sqrt{\rho/K} \, (x-s)} z_3(s)ds,\\
\psi(x) = \psi(0) e^{\sqrt{I_{\rho}/(EI)} \Big(\lambda \,x + \dfrac{1}{2I_{\rho}} \int_0^x b(s)ds\Big)
} - \sqrt{I_{\rho}/(EI)} \int_0^x e^{\sqrt{I_{\rho}/(EI)} \Big(\lambda \,(x-s) + \dfrac{1}{2I_{\rho}} \int_s^x b(\tau)d\tau\Big)
} \, z_4(s)ds.
\end{cases}
$$
Since $(p,\widehat{\varphi}, q, \psi)^T \in D(S_{1,C_0})$, we also have
$$
(p+q)(0)=(p+q)(l)=0, (\widehat{\varphi}+\psi)(0)=(\widehat{\varphi}+\psi)(l)=0.
$$
Setting $x=l$ in the above system and adding the resulting first and third equations, as well as in a second step the resulting second and fourth equations, we get
the following system of two equations to determine the two unknowns $p(0)$ and $\widehat{\varphi}(0)$:
$$
\begin{cases}
p(0) \Big [e^{\lambda \sqrt{\rho/K} \, l} - e^{-\lambda \sqrt{\rho/K} \, l} \Big ]= \sqrt{\rho/K} \int_0^l \Big [ e^{\lambda \sqrt{\rho/K} \, (s-l)}z_1(s) - e^{-\lambda \sqrt{\rho/K} \, (s-l)}z_3(s) \Big ]ds,\\
\widehat{\varphi}(0) \Big [e^{\sqrt{I_{\rho}/(EI)} \Big(\lambda \,l + \dfrac{1}{2I_{\rho}} \int_0^l b(s)ds\Big)} - e^{-\sqrt{I_{\rho}/(EI)} \Big(\lambda \,l + \dfrac{1}{2I_{\rho}} \int_0^l b(s)ds\Big)
}\Big ]= \\
\sqrt{I_{\rho}/(EI)} \int_0^l \Big [
e^{\sqrt{I_{\rho}/(EI)} \Big(\lambda \,(s-l) - \dfrac{1}{2I_{\rho}} \int_s^l b(\tau)d\tau\Big)} z_2(s) -
e^{-\sqrt{I_{\rho}/(EI)} \Big(\lambda \,(s-l) - \dfrac{1}{2I_{\rho}} \int_s^l b(\tau)d\tau\Big)} z_4(s)
\Big ]ds.
\end{cases}
$$
The two unknowns $p(0)$ and $\widehat{\varphi}(0)$ can be uniquely determined from the above system of two equations if and only if $\lambda \in \mathbb{C}$ satisfies:
\begin{equation}\label{Cb4.54}
e^{2\lambda \sqrt{\rho/K} \, l} \neq 1 \mbox{ and } e^{\sqrt{2I_{\rho}/(EI)} \Big(\lambda \,l + \dfrac{1}{2I_{\rho}} \int_0^l b(s)ds\Big)} \neq 1.
\end{equation}
For all $\lambda$ satisfying \eqref{Cb4.54}, one can easily check that the above computations allow us for all $Z \in X$ to determine a unique $U \in D(S_{1,C_0})$ solving
$(\lambda Id_X - S_{1,C_0})U=Z$. 
Thus the resolvent set of the operator $S_{1,C_0}$ is given by
$$
\mathbb{C} \backslash (\Big\{\dfrac{k\pi }{l} \sqrt{\dfrac{K}{\rho}} i, k \in \mathbb{Z}\Big\} \cup \Big\{- \dfrac{1}{2l \,I_{\rho}} \int_0^l b(y)dy +  \dfrac{k\pi }{l} \sqrt{\dfrac{E I }{I_{\rho}}} i, k \in \mathbb{Z}\Big\}),
$$
which proves our claim \eqref{Cb4.52}. Since $b$ is nonegative on $[0,l]$, we easily deduce that \eqref{C4.53} holds. 

Thanks to respectively Theorem B (see pages 324-325), to Lemma 5 (see page 336) of \cite{NRL1986}, applied with $\widehat{K}$ (replacing the corresponding matrix $K$ of \cite{NRL1986}), $C_0$, the matrix $\widehat{E}$ (replacing the corresponding matrix $E$ of \cite{NRL1986}), and the matrices $D, F, G$ as defined in the present paper and with the operator $S_{1,C_0}$ (replacing the operator $T_4$ in \cite{NRL1986}), we have
$$
r(e^{tS_{1,C_0}})=r_e(e^{tS_{1,C_0}})=e^{\alpha_0 t}=1 \quad \forall \ t \geqslant 0.
$$
This concludes the proof.
\end{proof}

\begin{oss}\label{thmB}
The above Proposition and its proof complete the gap in the proof of the claimed identity $\alpha=0$ in \cite{SW2003}.
\end{oss}
We can now state a complete proof of the non exponential stability of the Timoshenko system denoted by (1.3) in \cite{SW2003} as quoted at the beginning of Section 2 in the present paper.
\begin{thm}\label{NES}
Assume that \eqref{DS} holds and that $b$ is a nonnegative function on $[0,l]$. Then the Timoshenko system \eqref{C4.55} is non exponentially stable.
\end{thm}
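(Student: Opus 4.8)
The plan is to determine the essential spectral radius of the strongly continuous semigroup $(e^{tL})_{t\geqslant 0}$ generated by the operator $L$ of \eqref{C4.56}--\eqref{C4.57} associated to the original system \eqref{C4.55}, to show that it equals $1$ for every $t\geqslant 0$, and then to compare it with the spectral radius $r(e^{tL})=e^{\omega t}$ in order to rule out a strictly negative growth bound $\omega$. Recall from Remark \ref{StandardDef} that the semigroup is exponentially (equivalently uniformly) stable if and only if $\omega<0$; hence it suffices to prove $\omega\geqslant 0$.

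First I would reduce the computation from $L$ to the operator $L_1$ of \eqref{C4.33}--\eqref{C4.34}. The difference $D=L-L_1$ acts on the energy space $\mathcal{H}$ of \eqref{C4.32} only through the zeroth order term $(u,u_2,v,v_2)\mapsto\big(0,0,0,-\frac{K}{I_{\rho}}v\big)$, and since the embedding $H^1_0(0,l)\hookrightarrow L^2(0,l)$ is compact (Rellich), we have $D\in\mathcal{K}(\mathcal{H})$. By the Duhamel formula $e^{tL}-e^{tL_1}=\int_0^t e^{(t-s)L}\,D\,e^{sL_1}\,ds$, where the integrand is norm continuous in $s$ (a compact operator composed between two strongly continuous semigroups yields a norm continuous family) and takes compact values; the Bochner integral in operator norm is therefore a norm limit of compact operators, so $e^{tL}-e^{tL_1}$ is compact for each $t\geqslant 0$. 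Consequently $r_e(e^{tL})=r_e(e^{tL_1})$ for all $t\geqslant 0$.

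Next I would invoke the chain of identities established above. Proposition \ref{P2C0} gives $r_e(e^{tL_1})=r_e(e^{tS_{1,C_0}})$ for all $t\geqslant 0$; this is where the hypothesis \eqref{DS} is essential, since $K/\rho\neq EI/I_{\rho}$ is exactly what makes the two positive (and the two negative) eigenvalues $\pm\sqrt{K/\rho}$, $\pm\sqrt{EI/I_{\rho}}$ of $\widehat{K}$ distinct, so that the Neves--Ribeiro--Lopes framework, completed here through Propositions \ref{P1}, \ref{P2} and \ref{UptoPi}, applies. Then Proposition \ref{spectrumS1}, and in particular \eqref{C4.53b}, yields $r_e(e^{tS_{1,C_0}})=e^{\alpha_0 t}=1$ for all $t\geqslant 0$, using only the nonnegativity of $b$. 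Combining these equalities gives $r_e(e^{tL})=1$ for every $t\geqslant 0$.

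Finally, I would conclude by combining \eqref{relr} and \eqref{SRW}. From $r_e(e^{tL})\leqslant r(e^{tL})$ and $r(e^{tL})=e^{\omega t}$ we obtain $1=r_e(e^{tL})\leqslant e^{\omega t}$ for all $t\geqslant 0$, whence $\omega\geqslant 0$. By the characterization recalled in Remark \ref{StandardDef}, the semigroup $(e^{tL})_{t\geqslant 0}$ is not exponentially stable, i.e.\ system \eqref{C4.55} is non exponentially stable. I expect the main obstacle to be the very first reduction: passing from compactness of the generator perturbation $D$ to compactness of the difference of the two semigroups, which relies on the Duhamel representation and the norm-continuity argument above; the remaining equalities in the chain are precisely the content of the preceding propositions and are thus available directly.
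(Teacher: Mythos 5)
Your proposal is correct and follows essentially the same route as the paper's own proof: the chain $r_e(e^{tL})=r_e(e^{tL_1})=r_e(e^{tS_{1,C_0}})=1$ (via Proposition \ref{P2C0} and Proposition \ref{spectrumS1}), followed by $r(e^{tL})=e^{\omega t}\geqslant r_e(e^{tL})=1$ to conclude $\omega\geqslant 0$ and hence non-exponential stability. The only difference is that you elaborate the compactness of $e^{tL}-e^{tL_1}$ through the Duhamel formula --- a step the paper asserts in a single line --- and there your parenthetical claim about norm continuity of the integrand tacitly uses that $\mathcal{H}$ is a Hilbert space (norm continuity of $s\mapsto D\,e^{sL_1}$ follows from strong continuity of the adjoint semigroup together with compactness of $D^{*}$), which is indeed available here.
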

\begin{proof}
The argument stated in \cite{SW2003} that $r_e(e^{tL})=r_e(e^{tL_1})$ for all $t \geqslant 0$ is correct ($L-L_1$ is a compact operator in the energy space $\mathcal{H}$).
Using the complete proofs we gave in the present paper, we have thanks to Proposition \ref{P2C0} and to Proposition \ref{spectrumS1}:
$$
r_e(e^{tL_1})=r_e(e^{tS_{1,C_0}})=1 \quad \forall \ t \geqslant 0.
$$
Hence we have  $r_e(e^{tL})=1$ for all $ t \geqslant 0$. Since $r(e^{tL}) \geqslant r_e(e^{tL})=1$ as recalled in Remark \ref{G} (see \eqref{relr}), we have $r(e^{tL}) \geqslant 1$ for all $t \geqslant 0$.
We now denote by  $\omega_0$ the growth bound of the strongly continuous semigroup $(e^{tL})_{t \geqslant 0}$, defined as recalled in \eqref{OMG} by:
$$
\omega_0=\inf_{t>0}\Big(\dfrac{\log{\big(||e^{tL}||_{L(\mathcal{H})}\big)}}{t}\Big),
$$
Using the classical results recalled in Remark \ref{StandardDef} (see \eqref{SRW}), we have
$$
r(e^{tL})=e^{\omega_0 t} \geqslant 1, \quad \forall \ t \geqslant 0.
$$
Hence $\omega_0 \geqslant 0$, which proves that $(e^{tL})_{t \geqslant 0}$ is not exponentially stable.
\end{proof}
\begin{oss}\label{w0=0}
Indeed, since $b$ is nonnegative over $[0,l]$, it is easy to check that $(e^{tL})_{t \geqslant 0}$ is a semigroup of contractions (this is also recalled in \cite{SW2003}), so that $\omega_0 \leqslant 0$. This, together with the above inequality imply that $\omega_0 =0$.

Note also that the argument that $(e^{tL}) \geqslant 1$ and thus $(e^{tL})_{t \geqslant 0}$ is not exponentially stable is also used in \cite{SW2003}, however even for the last part, the proof of the computation of the spectral bound of $S_{1,C_0}$ is not complete, the authors having computed only the punctual spectrum of $S_{1,C_0}$, without checking that it is real part coincides with the real part of the full spectrum.
\end{oss}

\section{Open access to \cite{SW2003} and submission history}

The article \cite{SW2003} is available in open access at the link: 
\vskip 0.2 cm
\url{https://ejde.math.txstate.edu/Volumes/2003/29/soufyane.pdf}
%Its DOI is:\url{}
\vskip 0.2 cm
The pdf of the article starts at page 1 and ends at page 14.
\vskip 0.2 cm
The history of submission is indicated at the bottom of page 1 as follows:
\vskip 0.2cm 
\enquote{Submitted September 2, 2002. Published March 16, 2003}
\vskip 0.2 cm

More information on citations and disseminations of \cite{SW2003} can be found in \cite{AB2023I}.

\section{Conclusion}
In this part II, we give rigorous and correct proofs of the strong stability of Timoshenko beams by a single linear feedback in the second equation, and of the non uniform stability when the speeds of propagation are distinct. For this, we correct several arguments of \cite{SW2003}, introduce precise (absent) successive functional frames and state and prove several new properties and results. We also complete the proof of Theorem A in \cite{NRL1986} by stating and proving Proposition \ref{UptoPi} (see also Proposition \ref{UptoPiB} for a more general formulation).

\end{document}